\newcounter{noqed}
\newcommand{\qed}{ \ifmmode\mbox{ }\fi\rule[-.05em]{.3em}{.7em}\setcounter{noqed}{0}}
\newenvironment{proof}[1][{}]{\noindent{\bf Proof#1. }\setcounter{noqed}{1}}{\ifnum\value{noqed}=1\qed\fi\par\medskip}
\newcommand{\ep}[1]{\left\llbracket #1\right\rrbracket}
\newcounter{prgline}
\newcommand{\pl}{\theprgline\addtocounter{prgline}{1}}
\providecommand{\comp}{\circ}
\newcommand{\?}{\mskip1.5mu}
\newcommand{\singint}[1]{\left[#1\right]}
\newcommand{\sing}[1]{\left\{\?#1\?\right\}}
\newcommand{\crit}[1]{\mathscr C_{#1}}
\newcommand{\N}{\mathbf N}
\newcommand{\A}{\mathscr A}
\newcommand{\E}{\mathscr E}
\newcommand{\I}{\mathscr I}
\newcommand{\Inf}{\mathscr I^\infty}
\renewcommand{\L}{\mathscr L}
\newcommand{\cgen}[1]{\mathord{\text{\textasciitilde}}{#1}}
\newcommand{\downset}{\mathord\downarrow}
\newcommand{\upset}{\mathord\uparrow}
\newcommand{\himin}{t_{\mathrm{min}}}
\newcommand{\himax}{t_{\mathrm{max}}}
\renewcommand{\emptyset}{\varnothing}
\renewcommand{\epsilon}{\varepsilon}
\renewcommand{\phi}{\varphi}
\newcommand{\op}{{\operatorname{op}}}
\newcommand{\OR}{\vee}
\newcommand{\AND}{\wedge}
\newcommand{\ORirr}{$\OR$-irreducible\xspace}
\newcommand{\ANDirr}{$\AND$-irreducible\xspace}
\newcommand{\ORprime}{$\OR$-prime\xspace}
\newcommand{\ANDprime}{$\AND$-prime\xspace}
\newcommand{\ORrepr}{$\OR$-representation\xspace}
\newcommand{\ANDrepr}{$\AND$-representation\xspace}
\newcommand{\notstrcont}{\mathbin{\not\mathrel\rhd}}
\newcommand{\CCB}{Clarke--Cormack--Burkowski\xspace}
\newcommand{\sqleq}{\sqsubseteq}
\newcommand{\sqgeq}{\sqsupseteq}
\newcommand{\sqless}{\sqsubset}
\def\..{\,\mathpunct{\ldotp\ldotp}} % Middle stuff for intervals. Usage: \..
\newcommand{\rinf}[1]{[#1\..\rightarrow)}
\newcommand{\linf}[1]{(\leftarrow\..#1]}
\newtheorem{theorem}{Theorem}
\newtheorem{definition}{Definition}
\newtheorem{proposition}{Proposition}
\newtheorem{lemma}{Lemma}
\newtheorem{corollary}{Corollary}
\newcommand{\BEGIN}{\text{\textbf{begin}}\xspace}
\newcommand{\FOR}{\text{\textbf{for}}\xspace}
\newcommand{\TO}{\text{\textbf{to}}\xspace}
\newcommand{\END}{\text{\textbf{end}}\xspace}
\newcommand{\DO}{\text{\textbf{do}}\xspace}
\newcommand{\OD}{\text{\textbf{od}}\xspace}
\newcommand{\PROCEDURE}{\text{\textbf{procedure}}\xspace}
\title{On the lattice of antichains of finite intervals}
\author{Paolo Boldi\thanks{Dipartimento di Informatica,
    Universit\`a degli Studi di Milano, via Comelico 39, I-20135 Milano, Italy.
  \texttt{paolo.boldi@unimi.it}.}
\and Sebastiano Vigna\thanks{Contact author. Dipartimento di Informatica,
    Universit\`a degli Studi di Milano, via Comelico 39, I-20135 Milano, Italy.
  \texttt{sebastiano.vigna@unimi.it}.}}
\date{}
\begin{document}

\bibliographystyle{plain}

\maketitle

\begin{abstract}
Motivated by applications to information retrieval, we study the lattice of
antichains of finite intervals of a locally finite, totally ordered set. 
Intervals are ordered by reverse inclusion; the order between antichains is
induced by the lower set they generate. We discuss in general properties of such
\emph{antichain completions}; in particular, their connection with
Alexandrov completions. We prove the existence of a unique, irredundant
\ANDrepr by \ANDirr elements, which makes it possible to write 
the relative pseudo-complement in closed form. We also discuss in detail
properties of additional interesting operators used in information retrieval.
Finally, we give a formula for the rank of an element and for the height of the lattice.
\end{abstract}

\section{Introduction}
\label{sec:intro}

Modern information-retrieval systems, such as web search engines, rely on
different models to compute the answer to a query. The simplest one is 
the \emph{Boolean model}, in which operators are just conjunction,
disjunction and negation, and the answer is just ``relevant'' or ``not relevant''.
A richer model is given by \emph{minimal-interval semantics}~\cite{CCBASTSFI}, which
uses \emph{antichains} (w.r.t.~inclusion) of \emph{intervals} (i.e., textual
passages in the document) to represent the answer to a query; this is a
very natural framework in which operators such as ordered conjunction, proximity
restriction, etc., can be defined and combined freely. Each interval in the text of a document is a \emph{witness} of the satisfiability of
the query, that is, it represents a region of the document that satisfies the
query.
Words in the document are numbered (starting from $0$), so regions of text are
identified with integer intervals, that is, sets of consecutive natural numbers.
For example, a query formed by the conjunction of two terms is satisfied by the
minimal intervals representing the regions of the document that contain both
terms.
These intervals can be used not only to estimate the relevance of the document to the query~\cite{ClCSSRR}, but also to provide the user with
\emph{snippets}---fragments of texts witnessing where (and why) the document
satisfies the query.

Clarke, Cormack and Burkowski defined minimal-interval semantics in their
seminal work~\cite{CCBASTSFI}, but they missed the connection with lattice
theory, proving every single property (e.g., distributivity) from scratch: in
this paper, we firstly aim at providing a principled introduction of
their framework in terms of \emph{antichain completions} and their relation with
Alexandrov completions. On the other hand, we extend the study of these
structures by allowing infinite antichains, and characterizing in an elementary
manner operators, such as the relative pseudo-complement, some of which 
have an immediate interpretation in information retrieval.

We conclude by discussing interesting algebraic properties connecting
the operators with one another, and by providing a closed
form for the rank of an element.

In the rest of the paper we use the Hoare--Ramshaw
notation~\cite{GKPCM} for intervals in a partially ordered set: $[a\..b]=\{\?
x\mid a\leq x \leq b\?\}$ denotes a closed interval from $a$ to $b$,
$(\leftarrow\..b]=\{\?
x\mid x \leq b\?\}$ denotes an interval containing all elements less than or equal to $b$, and $[a\..\rightarrow)=\{\?
x\mid a\leq x\?\}$ denotes an interval containing all
elements greater than or equal to $a$.  
In case we need to treat indifferently intervals of the form $[a\..b]$ or
$(\leftarrow\..b]$, we will use the shortcut notation $[-\..b]$, and analogously
for $[a\..-]$. We also
write $\singint x$ as a shortcut for $[x\..x]=\{\?x\?\}$.

\section{A motivating example}

\begin{figure}
\begin{center}
\includegraphics[scale=.7]{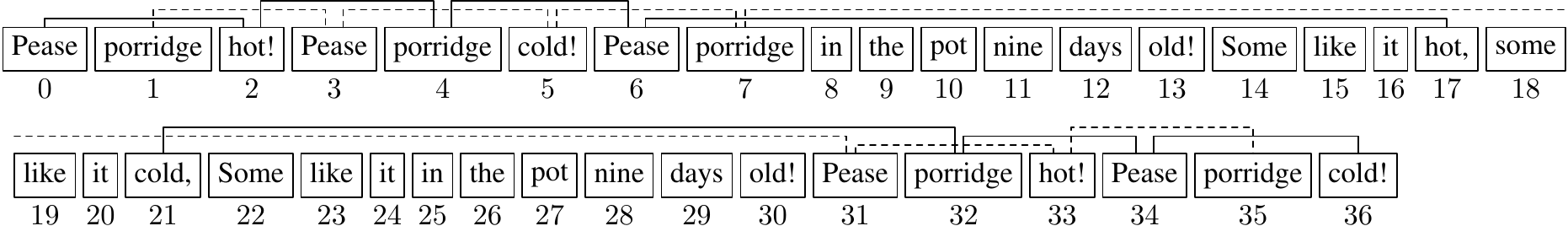}
\end{center}
\caption{\label{fig:example}A document; the intervals corresponding to the
semantics of the query ``\emph{porridge}
AND
\emph{pease} AND (\emph{hot} OR \emph{cold})'' are shown. For easier
reading, every other interval is dashed.}
\end{figure}

In this section we provide an introduction to minimal-interval semantics
by examples. Consider the text document represented in Figure~\ref{fig:example}.
Queries associated with a single keyword have a natural
semantics in terms of antichains of intervals---the list of positions where
the keyword occurs as singleton intervals.
For example, since the word ``hot'' appears only three times (in positions 2, 17 and 33), the
semantics of ``\emph{hot}'' will be
\[
\{\?[2\..2],[17\..17],[33\..33]\?\}.
\]
If we start combining terms disjunctively, we get simply the union of their
positions.
For instance, ``\emph{hot} OR \emph{cold}'' gives
\[
\{\?[2\..2],[5\..5],[17\..17],[21\..21],[33\..33],[36\..36]\?\}.
\]
If we consider the conjunction of two terms, we will start getting non-singleton
intervals: the semantics of ``\emph{pease} AND \emph{porridge}'' is computed by
picking all possible pairs of positions of \emph{pease} and \emph{porridge}
and keeping the minimal intervals among those spanned by such pairs:
\[
\{\?[0\..1],[1\..3],[3\..4],[4\..6],[6\..7],[7\..31],[31\..32],[32\..34],[34\..35]\?\}.
\]
The more complex query ``(\emph{pease} AND \emph{porridge}) OR \emph{hot}'' is
interesting because we have to take the intervals just computed, put them
together with the positions of \emph{hot}, and remove the non-minimal intervals:
\[
\{\?[0\..1],[2\..2],[3\..4],[4\..6],[6\..7],[17\..17],[31\..32],[33\..33],[34\..35]\?\}.
\]
One can see, for example, that the presence of \emph{hot} in position $2$ has
eliminated the interval $[1\..3]$.

Let's try something more complex: ``\emph{pease} AND \emph{porridge} AND (\emph{hot} OR \emph{cold})''.
We have again to pick one interval from each
of the three sets associated to ``\emph{pease}'', ``\emph{porridge}'' and ``\emph{hot} OR \emph{cold}'',
and keep the minimal intervals among those spanned by such triples (see
Figure~\ref{fig:example}):
\begin{multline*}
\{\?[0\..2],[1\..3],[2\..4],[3\..5],[4\..6],[5\..7],[6\..17],[7\..31],\\
[21\..32],[31\..33],[32\..34],[33\..35],[34\..36]\?\}.
\end{multline*}
From this rich semantic information, a number of different outputs can be
computed. A simple snippet extraction algorithm would
compute greedily the first $k$ smallest nonoverlapping intervals of the antichain, which would yield, for
$k=3$, the intervals $[0\..2]$, $[3\..5]$, $[31\..33]$, that is, ``\emph{Pease}
\emph{porridge} \emph{hot}!'', ``\emph{Pease} \emph{porridge} \emph{cold}!'',
and, again,  ``\emph{Pease} \emph{porridge} \emph{hot}!''.

A ranking scheme such as that proposed by Clarke and Cormack~\cite{ClCSSRR}
would use the number and the length of these intervals to assign a score to the document with
respect to the query. In a simplified setting, we can
assume that each interval yields a score that is the inverse of its length. The
resulting score for the query above would be
\begin{multline*}
\frac1{|[0\..2]|}+\frac1{|[1\..3]|}+\cdots+\frac1{|[6\..17]|}+\frac1{|[7\..31]|}+\cdots
\frac1{|[34\..36]|} \\
=\frac13+\frac13+\cdots+\frac1{12}+\frac1{25}+\cdots\frac13=\frac{177}{50}=3.54.
% 6/3+1/12+1/25+1/12+4/3
\end{multline*}
Clearly, documents with a large number of intervals are more relevant, and
short intervals increase the document score more than long intervals, as short
intervals are more informative.
The score associated to ``\emph{hot}'' would be just $3$ (i.e., the number of
occurrences). One can also incorporate positional information, making, for
example, intervals appearing earlier in the document more
important~\cite{BoVTREC2005}.

\section{Preliminaries of lattice theory}

In this section we are going to recall and briefly summarize some results from
lattice and order theory that we will need in the rest of the paper. The reader
can find more details, for example, in~\cite{ErnABCOT}. Note that in the previous sections we
used the word ``minimal'' always meaning ``by inclusion'', but in the following sections
its meaning will depend on the underlying order.

Let $P=(P,\leq)$ be a partially ordered set (poset). 
If $P$ has a minimum element, or bottom, $0$ (a maximum element, or top, $1$,
respectively) an \emph{atom}
(\emph{coatom}, respectively) of $P$ is an element $x \in P$ such that $0<
y\leq x$ ($x\leq y< 1$, respectively) implies $y=x$.
We say that $P$ is \emph{atomic} (\emph{coatomic}, respectively) if, for every
$x\neq 0$ ($x \neq 1$, respectively), there exists an atom (coatom,
respectively) $a$ such that $a \leq x$ ($x \leq a$, respectively); it is
\emph{strongly (co)atomic} if, for every two $x,y \in P$ such that $x < y$, the
poset induced by $[x\..y]$ is (co)atomic. 
We say that $P$ is \emph{atomistic}
(\emph{coatomistic}, respectively) if every $x\neq1$ ($x \neq0$,
respectively) is the $\vee$ ($\wedge$, respectively) of a set of atoms (coatoms, respectively). 
Note that (co)atomistic implies (co)atomic.
For a given poset $P=(P,\leq)$, we
let $P^\op$ denote the dual of $P$, that is, $(P,\leq)^\op=(P,\geq)$.

A \emph{$\vee$-semilattice}
(\emph{$\wedge$-semilattice}) is a poset that has all binary joins
(meets). A \emph{lattice} is a $\vee$-semilattice that is also a
$\wedge$-semilattice; it is 
\emph{bounded} if it has a top and a bottom.

We say that a lattice $L=(L,\leq)$ is a \emph{Heyting algebra}~\cite{JohSS} if
it is bounded and, for every $x$, the map $x \wedge \cdot$ has a right adjoint. In other words, for
every $x,z \in L$ there exists a greatest $y$ such that $x \wedge y \leq z$. Such a $y$ is denoted by $x \to z$ and called the
\emph{pseudo-complement of $x$ relative to $z$}. 

Dually, it is a \emph{Brouwerian algebra}~\cite{McTCECA}\footnote{There is some confusion in the literature about the usage of the 
adjective ``Brouwerian'': some authors call ``Brouwerian lattice'' 
what other authors call a ``Heyting algebra''.} if it is bounded and,
for every $x$, the map $x \vee \cdot$ has a left adjoint. In other words, for every
$x,z \in L$ there exists a smallest $y$ such that $z \leq x \vee y$. Such a $y$ is denoted by $x - z$ and called the \emph{pseudo-difference
between $x$ and $y$}.

For the
remainder of this section, $L=(L,\leq)$ will be a \emph{complete} lattice, that
is, a poset with $\vee$ and $\wedge$ for arbitrary sets. 
We say that $L$ is a \emph{completely distributive lattice} if, for every
collection $\L\subseteq 2^L$ of subsets of $L$, we have
\[
	\bigwedge\left(\bigvee \L\right)= \bigvee\left(\bigwedge
	\L^\sharp\right)
\]
where $\L^\sharp$ is the family of all subsets of $L$ that have a
nonempty intersection with all sets of $\L$. Every completely
distributive lattice is at the same time a Heyting and a Brouwerian algebra.

\smallskip
An element $x \in L$ is called:
\begin{itemize}
  \item \emph{\ORirr} (\ANDirr,
  respectively) if and only if $x=a\vee b$
  ($x=a\wedge b$, respectively) implies $x = a$ or $x = b$, for all $a,b\in L$;
  \item \emph{\ORprime} (\ANDprime, respectively) if and
  only if $x\leq a \vee b$ ($x\geq a \wedge b$,
  respectively) implies $x \leq a$ or $x \leq b$ ($x \geq a$ or $x \geq b$, respectively), for all $a,b\in L$;
  \item \emph{completely \ORirr} (\ANDirr,
  respectively) if and only if, for every $X \subseteq L$, $x=\bigvee X$
  ($x=\bigwedge X$, respectively) implies $x\in X$;
  \item \emph{completely \ORprime} (\ANDprime, respectively) if
  and only if, for every $X \subseteq L$, $x\leq\bigvee X$ ($x\geq\bigwedge X$,
  respectively) implies $x \leq a$ ($x\geq a$, respectively) for some $a \in X$.
\end{itemize}

In general, (complete) primality implies (complete) irreducibility. In
(completely) distributive lattices, the reverse holds, too.

We say that $L$ is \emph{superalgebraic} if every
element is the join of a set of completely \ORprime elements.
This notion is self-dual, that is, in a superalgebraic lattice every element is
also a meet of a set of completely \ANDprime elements. If a lattice is
completely distributive, of course, one can replace primality with
irreducibility in the definition above.

\section{The Alexandrov completion}
\label{sec:alex}

The \emph{Alexandrov completion} of a poset
$P=(P,\sqleq)$ is defined as $\L(P)=(\L(P),\subseteq)$
where $\L(P)$ is the family of lower sets\footnote{A set $X\subseteq P$ is a \emph{lower set} if
$x\sqleq y \in X$ implies $x \in X$. \emph{Upper sets} are defined dually.}
of $P$.

The nature of $\L(P)$ is well known.
Ern\'e~\cite{ErnABCOT} studied the interplay between Alexandrov completions of
posets, Alexandrov spaces (topological spaces where arbitrary unions of closed
sets is still closed) satisfying the $T_0$ separation axiom (for every pair of
points there is an open set containing exactly one of them), and completely
distributive lattices. In general, Alexandrov completions are 
superalgebraic~\cite[Proposition 2.2.B]{ErnABCOT}. However, we can prove
more if $P$ satisfies the \emph{ascending chain condition} (ACC), that is, if it
does not contain infinite chains of the form $x_0 \sqless x_1 \sqless x_2
\sqless \cdots$:
 \begin{theorem}[\protect{\cite[Lemma 5.12]{ErnABCOT}}]
	If $P$ satisfies the ACC then
	$\L(P)$ is a strongly coatomic, superalgebraic, 
	completely distributive lattice.\footnote{In fact, much more is true: every
	strongly coatomic, superalgebraic, completely distributive lattice is
	isomorphic to the Alexandrov completion of a poset satisfying the ACC. Moreover,
such lattices are exactly the lattices of closed sets of \emph{sober}
$T_0$ Alexandrov spaces~\cite{ErnABCOT}. By a natural choice of morphisms, this
correspondence can be made into a categorical equivalence.}
\end{theorem}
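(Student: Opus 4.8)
The plan is to first record the basic structure: $\L(P)$ is a complete lattice in which arbitrary joins and meets are computed simply as unions and intersections of lower sets (a union or intersection of lower sets is again a lower set), with bottom $\emptyset$ and top $P$. I would then dispatch the three required properties one by one, observing at the outset that two of them hold for any Alexandrov completion and only the third genuinely invokes the ACC. For complete distributivity I would argue purely set-theoretically: given a family $\L\subseteq 2^{\L(P)}$, an element $p\in P$ lies in $\bigwedge(\bigvee\L)=\bigcap_{S\in\L}\bigcup S$ exactly when, for every $S\in\L$, some lower set of $S$ contains $p$. Selecting one such lower set from each $S$ yields a transversal $T\in\L^\sharp$ with $p\in\bigcap T$, so $p\in\bigvee(\bigwedge\L^\sharp)$; conversely any $T\in\L^\sharp$ meets every $S$, giving the reverse inclusion at once. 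Hence the two sides agree and no chain condition is needed.

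Next I would identify the completely $\OR$-prime elements of $\L(P)$ as the principal lower sets $\downset x=\{\,y\mid y\sqleq x\,\}$. Indeed, if $\downset x\subseteq\bigcup_i L_i$ then $x$ belongs to some $L_j$, and since $L_j$ is a lower set already $\downset x\subseteq L_j$, which is precisely complete $\OR$-primality. Because every lower set satisfies $L=\bigcup_{x\in L}\downset x$ with each $\downset x\subseteq L$, every element of $\L(P)$ is a join of completely $\OR$-prime elements, so $\L(P)$ is superalgebraic. (This is the general fact already cited; again the ACC plays no role.)

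The hard part, and the only place where the ACC is essential, is strong coatomicity. Here I must show that for lower sets $X\subsetneq Y$ the interval $[X\..Y]$ is coatomic, i.e.\ that every $Z$ with $X\subseteq Z\subsetneq Y$ lies below some coatom of the interval. The guiding observation is that deleting a single maximal element produces a cover: if $m$ is maximal in $Y$, then $Y\setminus\{m\}$ is still a lower set and is covered by $Y$. To place such a coatom above $Z$, I would pick any $a\in Y\setminus Z$ and invoke the ACC in the form ``every nonempty subset of $P$ has a maximal element'' to choose an $m$ maximal in $Y$ with $m\sqgeq a$; since $Z$ is a lower set and $a\notin Z$, maximality forces $m\notin Z$. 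Then $C=Y\setminus\{m\}$ is a lower set containing $Z$ (hence $X$) and covered by $Y$, so it is the desired coatom of $[X\..Y]$ above $Z$.

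The main obstacle is exactly this appeal to the ACC: it is what guarantees a maximal element of $Y$ dominating the chosen witness $a\in Y\setminus Z$. Without it, $Y\setminus Z$ could carry an infinite ascending chain, no element of $Y$ above $a$ would be maximal, and there need be no cover of $Y$ lying above $Z$; so the ACC is not merely convenient but indispensable for strong coatomicity, whereas completely distributivity and superalgebraicity survive for arbitrary $P$.
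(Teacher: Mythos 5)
Your proof is correct, but it cannot be compared with a proof in the paper for the simple reason that the paper gives none: this theorem is quoted verbatim from Ern\'e (Lemma 5.12 of the cited reference) and used as a black box. What you have produced is a self-contained elementary verification, and all three parts check out. Your observation that joins and meets in $\L(P)$ are literally unions and intersections reduces complete distributivity to a transversal/choice argument that is exactly the set-lattice computation one expects, and your identification of the principal lower sets $\downset\sing x$ as completely $\OR$-prime (hence superalgebraicity, since every lower set is the union of the principal lower sets of its elements) reproduces the general fact the paper separately cites as \cite[Proposition 2.2.B]{ErnABCOT}; you are right that neither of these uses the ACC. For strong coatomicity your argument is also sound: the one micro-step you elide is that the ACC gives you a maximal element $m$ of the subset $\upset\sing a\cap Y$ rather than of $Y$ directly, and one must note that such an $m$ is automatically maximal in all of $Y$ (anything of $Y$ strictly above $m$ would also lie above $a$); with that, $Y\setminus\sing m$ is a lower set covered by $Y$ and containing $Z$, as you say. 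Your closing remark that the ACC is genuinely needed is correct and easy to witness (for $P=\N$ the interval from $\emptyset$ to $\N$ in $\L(P)$ has no coatoms at all), though the theorem does not ask for this converse. The net effect of your approach is a gain in self-containedness at the cost of not obtaining Ern\'e's stronger surrounding results (the converse characterization and the topological correspondence mentioned in the footnote), which the citation delivers for free.
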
 

The smallest lower set containing $X\subseteq P$ will be denoted by $\downset
X$ (analogously for the smallest upper set $\upset X$). Note that the map from
$P$ to $\L(P)$ sending an element $x$ to $\downset\sing x$, the \emph{principal ideal associated with $x$}, 
is an \emph{order embedding} (an injective function preserving and reflecting the
order relations) and preserves all meets existing in $P$.

\paragraph{(Completely) irreducible elements.}
Since Alexandrov completions are superalgebraic and completely distributive,
every element is the $\vee$ of a set of completely \ORirr
elements, and the $\wedge$ of a set of completely
\ANDirr elements.

The completely \ORirr elements in the Alexandrov completion
$\L(P)$ are in one-to-one correspondence with the elements of $P$, via the map
sending $x \in P$ to $\downset\sing x$. Since $\L(P)$ is strongly
coatomic, there is no difference between \ORirr and completely \ORirr
elements~\cite{CrDATL}. Moreover, each element has an
\emph{irredundant}\footnote{A \ORrepr $X$ of $x$ is \emph{irredundant} if no proper subset of $X$ can be used to represent $x$, that
is, $x =\bigvee X\neq \bigvee Y$ for every $Y\subset X$. Analogously, one
can define irredundant $\wedge$-representations.} \ORrepr by (completely) \ORirr
elements~\cite[Proposition~6.4]{CrDATL}. 

Also the completely $\wedge$-irreducible elements in 
$\L(P)$ are in one-to-one correspondence with the elements of $P$,
this time via the map sending $x \in P$ to $P\setminus \upset\sing x$.
If $P$ is coatomistic, the 
set $P\setminus \upset\sing x$ can be rewritten as the union of the
principal ideals $\downset\sing c$ for all the coatoms $c$ not above $x$. 
We remark,
however, that in this case there is no automatic equivalence between
$\wedge$-irreducible and completely $\wedge$-irreducible elements, as ${\mathscr
L}(P)$ is not strongly atomic in general. For the same reason,
some elements might not have an irredundant \ANDrepr by
completely \ANDirr elements.

\section{The antichain completion}
An \emph{antichain} of a poset $P=(P,\sqleq)$
is a subset $A \subseteq P$ whose elements are pairwise
$\sqleq$-incomparable.
The set of antichains of $P$ is denoted by $\A(P)$. It is possible to define a partial order on the antichains of
$P$ by letting, for all $A,B \in \A(P)$,
\[
  A \leq B \qquad\text{iff}\qquad \downset A\subseteq \downset B.
\]
We can equivalently describe (perhaps in a more direct way) the order of
$\A(P)$ as follows: given antichains $A$ and $B$,
\begin{equation}
\label{eq:alexorder}
A\leq B \qquad\text{iff}\qquad \forall x \in A \,\,\exists y \in B \quad x
\sqleq y.
\end{equation}

We call the partial order $\A(P)=(\A(P),\leq)$ the
\emph{antichain completion of $P$}. The map $\downset(-)$ defines an
 order embedding from $\A(P)$ to $\L(P)$, through which the
 embedding of $P$ into $\L(P)$ factors as
 $x\mapsto\sing x$:
\[\xymatrix{
(P,\sqleq) \ar[rrd]_{x\mapsto\downset\sing x}\ar[rr]^{x\mapsto\sing x} &&
(\A(P),\leq)\ar[d]^{A\mapsto\downset A}\\
&&(\L(P),\subseteq)}\]
 Note that, as any order embedding, $\downset(-)$ reflects all joins and meets: many proofs about
$\A(P)$ can be carried out by exploiting judiciously this property. In the main
object of study of this paper, the embedding $\downset(-)$ will be an
isomorphism (see Theorem~\ref{thm:alexandroff} below), but this is not true in
general.

In fact, $\A(P)$ may not even be a
lattice: if we consider, for example, a poset $P$ with elements
$a_0\sqless a_1\sqless a_2\sqless \cdots\sqless a_k\sqless \cdots$ and two additional, incomparable elements $b$
and $c$ that are greater than all of the $a_i$'s, it is easy to see that the meet $\sing b\sqcap\sing c$ does not
exist in $\A(P)$. However, $\A(P)$ is always a $\vee$-semilattice endowed
with unique $\lor$-representations by \ORirr elements:\footnote{The reader
should note that we defined \ORirr elements for \emph{lattices}, but in fact the definition is
sensible in any $\vee$-semilattice.}
\begin{theorem}
\label{th:antisup}
Let $P=(P,\sqleq)$ be a poset. The antichain completion $\A(P)$ is
a $\vee$-semilattice, where the join of $A, B\in \A(P)$ is given by the
maximal elements of $A\cup B$. Moreover, the \ORirr elements of
$\A(P)$ are the singleton antichains, and given $A\in\A(P)$
\[
A=\bigvee_{x\in A}\sing x
\]
is the unique irredundant $\lor$-representation of $A$ by \ORirr elements. 
\end{theorem}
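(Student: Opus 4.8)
The plan is to verify the three assertions in sequence---the description of the join, the identification of the \ORirr elements, and the uniqueness of the representation---with the explicit join formula doing most of the work. The one genuinely load-bearing observation is elementary: the union of two antichains contains no chain of length three, because any chain meets each of $A$ and $B$ in at most one point (two comparable points of an antichain would coincide). This is what lets me talk about maximal elements in an arbitrary poset with no chain condition.

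First I would set $M:=\max(A\cup B)$, the set of $\sqleq$-maximal elements of $A\cup B$. By the height-two remark, every element of $A\cup B$ is either maximal or strictly below a (necessarily maximal) element, so each element of $A\cup B$ lies $\sqleq$ some element of $M$; and $M$ is an antichain since maximal elements are pairwise incomparable. Using the characterization~\eqref{eq:alexorder}, $A\le M$ and $B\le M$ are immediate. If $C$ is any antichain with $A\le C$ and $B\le C$, then every element of $M\subseteq A\cup B$ lies $\sqleq$ some element of $C$, so $M\le C$ by~\eqref{eq:alexorder}. Hence $M=A\vee B$, and $\A(P)$ is a $\vee$-semilattice.

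Next, the \ORirr elements. If $\sing x=A\vee B=\max(A\cup B)$, then $x$ is the unique maximal element of $A\cup B$; say $x\in A$. Every element of $A$ is then $\sqleq x$, and since $A$ is an antichain containing $x$ this forces $A=\sing x$ (symmetrically if $x\in B$), so singletons are \ORirr. Conversely, if $A$ has at least two elements, fix $x\in A$ and write $A=\sing x\vee(A\setminus\sing x)$: both factors are antichains, their join is $\max(A)=A$, and both are \emph{strictly} below $A$ because the map $\downset(-)$ is injective on antichains (one recovers $A$ as $\max(\downset A)$, so distinct antichains are distinct in $\A(P)$). Thus $A$ is reducible. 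The empty antichain is the bottom of $\A(P)$ and is excluded by the usual convention (equivalently, it is the empty join), leaving exactly the singletons.

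Finally the representation. I would first check $A=\bigvee_{x\in A}\sing x$ even for infinite $A$: any upper bound $C$ of $\{\sing x:x\in A\}$ satisfies $\sing x\le C$ for every $x\in A$, which by~\eqref{eq:alexorder} is exactly $A\le C$, so $A$ is the least upper bound. For uniqueness, suppose $A=\bigvee S$ is an irredundant representation with $S=\{\sing y:y\in Y\}$ for some $Y\subseteq P$. Passing to lower sets gives $\downset A=\bigcup_{y\in Y}\downset\sing y=\downset Y$, whence $A=\max(\downset A)=\max(\downset Y)\subseteq Y$. Were the inclusion strict, $\{\sing x:x\in A\}$ would be a proper subset of $S$ already representing $A$, contradicting irredundancy; hence $Y=A$. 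The same lower-set computation shows the canonical representation is itself irredundant, since any proper subset $\{\sing x:x\in A'\}$ with $A'\subsetneq A$ has join the antichain $A'\neq A$. The main obstacle, as noted, is purely the well-definedness of $M=\max(A\cup B)$ in the absence of any chain condition; everything after that is bookkeeping with $\downset(-)$ and~\eqref{eq:alexorder}, with mild care for infinite and empty antichains.
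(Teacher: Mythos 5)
Your treatment of the join and of the \ORirr elements is correct and follows essentially the paper's route; the explicit ``height two'' observation (a union of two antichains contains no three-element chain, so every element of $A\cup B$ sits under a maximal one) is precisely the point the paper leaves implicit when it asserts that the lower set generated by the maximal elements of $A\cup B$ is $\downset A\cup\downset B$, and it is good that you isolated it. The gap is in the uniqueness argument. From $A=\bigvee_{y\in Y}\sing y$ you ``pass to lower sets'' and write $\downset A=\bigcup_{y\in Y}\downset\sing y$. For a binary (hence finite) join this follows from the join formula you just proved, but for an infinite family it amounts to claiming that the order embedding $\downset(-)$ \emph{preserves} arbitrary joins, whereas an embedding is only guaranteed to \emph{reflect} them. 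The paper's own example refutes the claim: in the chain $a_0\sqless a_1\sqless\cdots\sqless a$ one has $\sing a=\bigvee_i\sing{a_i}$, yet $\downset\sing a$ contains $a$ while $\bigcup_i\downset\sing{a_i}$ does not; indeed the latter set has no maximal elements at all, so your next step $A=\max(\downset Y)$ would yield $A=\emptyset$. Since $Y$ can be infinite even when $A$ is a singleton, this step cannot be waved through, and it is the step on which $A\subseteq Y$ rests.

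The repair is short and avoids lower sets of $Y$ entirely. First, irredundancy forces $Y$ to be an antichain: if $y\sqless y'$ with both in $Y$, then $\sing y\leq\sing{y'}$, so $\{\sing z\mid z\in Y\}$ and $\{\sing z\mid z\in Y\setminus\{y\}\}$ have the same upper bounds and hence the same join, contradicting irredundancy. Second, once $Y$ is an antichain it is itself an upper bound of $\{\sing y\mid y\in Y\}$, and by~(\ref{eq:alexorder}) every upper bound $C$ of that family satisfies $Y\leq C$; hence $Y=\bigvee_{y\in Y}\sing y=A$. (This is the content of the paper's terse remark that irredundant representations by singletons are in bijection with antichains of $P$.) Your verification that the canonical representation is itself irredundant does not depend on the flawed step---it only uses the fact, which you proved correctly, that the join of the singletons over an antichain $A'$ is $A'$---so the rest of the proof stands as written.
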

\begin{proof}
The first statement is proved by noting that the maximal elements of $\downset
A\cup \downset B$ are exactly the maximal elements of $A\cup B$; since the lower
set of the maximal elements of $A\cup B$ is $\downset A\cup\downset B$, we
obtain the result by the fact that order embeddings reflect joins.
The second statement is a trivial consequence. For the third statement, note
that $A$ is equal to the maximal elements of $\bigcup_{x\in A}\downset\sing x$, and that 
irredundant $\lor$-representations by \ORirr elements (i.e., singleton
antichains) are in bijection with antichains of elements of $P$.
\end{proof}
The poset $a_0\sqless a_1\sqless a_2\sqless \cdots\sqless a_k\sqless \cdots$ shows that an
antichain completion is not necessarily $\vee$-complete: the join of the singleton antichains $\sing{a_i}$,
$i\in\N$, does not exist. On the other hand, the poset
$a_0\sqless a_1\sqless a_2\sqless \cdots\sqless a_k\sqless \cdots\sqless a$ shows that singleton antichains are not
necessarily \emph{completely} \ORirr, as $\sing a = \bigsqcup_{i\in\N}\sing{a_i}$.

\paragraph{The antichain completion as a lattice.} 
There is a simple necessary and sufficient condition that will turn the antichain completion into a lattice:
\begin{theorem}
\label{th:antiinf}
Let $P=(P,\sqleq)$ be a poset. Then, $\A(P)$ is a
lattice iff for every $A, B\in \A(P)$  there is a $C\in \A(P)$
such that
\begin{equation}
\label{eq:cap}
\downset A \cap \downset B=\downset C,
\end{equation}
and then $A\wedge B=C$; moreover, under this condition $\A(P)$ is
also distributive.
In particular, if $P$ is a $\sqcap$-semilattice and $\A(P)$ is a lattice the
meet of $A, B\in \A(P)$ is given by the maximal elements of $\{\?a\sqcap b\mid a \in A, b\in B\?\}$.
\end{theorem}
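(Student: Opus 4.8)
The plan is to use the order embedding $\downset(-)\colon\A(P)\to\L(P)$ as a bridge to the lattice $\L(P)$ of lower sets, where meets are simply intersections. The one fact that does almost all the work is the bridge equivalence $\sing z\leq A\iff z\in\downset A$, which is immediate from~\eqref{eq:alexorder}: $\sing z\leq A$ means $\exists y\in A$ with $z\sqleq y$, i.e.\ $z\in\downset A$.

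First I would settle the backward implication. Assume that for all $A,B$ there is an antichain $C$ with $\downset A\cap\downset B=\downset C$. Since $\downset C\subseteq\downset A$ and $\downset C\subseteq\downset B$, the element $C$ is a lower bound of $\{A,B\}$; and any lower bound $D$ satisfies $\downset D\subseteq\downset A\cap\downset B=\downset C$, whence $D\leq C$ because $\downset(-)$ reflects the order. Thus $A\wedge B=C$, and since $\A(P)$ is always a $\vee$-semilattice (Theorem~\ref{th:antisup}), the existence of all binary meets upgrades it to a lattice.

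The forward implication is where the only real subtlety lies: I must show that the abstract greatest lower bound $C=A\wedge B$, which a priori is characterised only by a universal property inside $\A(P)$, in fact realises the set-theoretic intersection $\downset A\cap\downset B$. The inclusion $\downset C\subseteq\downset A\cap\downset B$ is free from $C\leq A$ and $C\leq B$. For the reverse inclusion I would test against singletons: if $z\in\downset A\cap\downset B$ then $\sing z\leq A$ and $\sing z\leq B$ by the bridge equivalence, so $\sing z\leq A\wedge B=C$, i.e.\ $z\in\downset C$. Hence $\downset A\cap\downset B=\downset C$, which is exactly~\eqref{eq:cap}. I expect this singleton argument to be the crux; everything else is bookkeeping through the embedding.

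Distributivity then follows for free: under the condition, $\downset(-)$ preserves binary joins (always, since $\downset(A\vee B)=\downset A\cup\downset B$ by Theorem~\ref{th:antisup}) and binary meets ($\downset(A\wedge B)=\downset A\cap\downset B$), so it is a lattice isomorphism onto a sublattice of $\L(P)$; as $\L(P)$ is distributive (being a sublattice of the Boolean algebra $2^P$), so is $\A(P)$. Finally, for the $\sqcap$-semilattice statement, I would identify the intersection explicitly. Writing $S=\{\?a\sqcap b\mid a\in A,\ b\in B\?\}$, the inclusion $\downset S\subseteq\downset A\cap\downset B$ is clear since $a\sqcap b\sqleq a$ and $a\sqcap b\sqleq b$; for the reverse, any $z$ with $z\sqleq a$ and $z\sqleq b$ is a common lower bound of $a,b$, hence $z\sqleq a\sqcap b\in S$, giving $z\in\downset S$. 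Thus $\downset A\cap\downset B=\downset S$, and since $\A(P)$ is assumed to be a lattice this common value equals $\downset C$ for $C=A\wedge B$. As the generating antichain of a principal lower set is precisely its set of maximal elements, and the maximal elements of $\downset S$ coincide with those of $S$, I conclude $A\wedge B=C$ is the set of maximal elements of $S$, as claimed.
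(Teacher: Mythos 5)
Your proof is correct, and it follows the paper's overall strategy of working through the order embedding $\downset(-)$ into $\L(P)$; the backward implication, the distributivity argument, and the $\sqcap$-semilattice computation are essentially identical to the paper's. Where you genuinely diverge is the forward (necessity) direction. The paper argues by contradiction: it assumes $X=\downset A\cap\downset B$ is not generated by an antichain, observes that then $\downset(A\wedge B)\subset X$ strictly, picks an $x\in X\setminus\downset(A\wedge B)$, and exhibits $(A\wedge B)\vee\sing x$ as a strictly larger lower bound of $\{A,B\}$, contradicting the universal property of the meet. You instead prove directly that $\downset(A\wedge B)=\downset A\cap\downset B$ by testing against singletons: $z\in\downset A\cap\downset B$ gives $\sing z\leq A$ and $\sing z\leq B$, hence $\sing z\leq A\wedge B$, hence $z\in\downset(A\wedge B)$. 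The two arguments use the same underlying mechanism (adjoining a singleton below the meet), but your direct version is cleaner: it identifies the witness $C$ explicitly as $A\wedge B$ from the start, and it sidesteps the paper's intermediate claim that a lower set not generated by an antichain must contain an element unbounded by any maximal element, a claim the paper states but does not really need or justify. Both proofs are sound; yours is the tidier of the two on this point.
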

\begin{proof}
Condition~(\ref{eq:cap}) is sufficient for $\A(P)$ to be a lattice: indeed,
it is equivalent to the fact that the image of $\A(P)$ with respect to
$\downset(-)$ is a $\cap$-semilattice. Since we know already from Theorem~\ref{th:antisup} that it is a $\cup$-semilattice, we conclude that it is
a lattice, so the same is true of $\A(P)$. Since the image of $\A(P)$ is a set
lattice, it is distributive, and once again the same is true of
$\A(P)$.

Let us show that condition~(\ref{eq:cap}) is also necessary: suppose by
contradiction that $\A(P)$ is a lattice, but there are $A, B\in \A(P)$ such that
the lower set $X=\downset A \cap \downset B$ is not generated by an antichain.
In this case, $X$ contains an element $x$ that is not bounded by any maximal
element of $X$. 
Still there is a $C$ such that $C=A\wedge B$. Then necessarily
$C\subseteq\downset C\subset X$. Since both $\downset C$ and $X$ are lower sets,
this implies that there is an $x\in X$ that is strictly greater or
incomparable with any element of $C$: thus, $C<C \vee\sing x\leq A
\wedge B$, a contradiction.

For the second statement, recall that $\downset X=\downset Y$ implies that $X$
and $Y$ have the same maximal elements. Since it is immediate to show that
\[
\downset \{\?a\sqcap b\mid a \in A, b\in B\?\} = \downset A \cap \downset
B=\downset C
\]
and $C$ is an antichain, the thesis follows.
\end{proof}
\begin{figure}
\centering
\includegraphics[scale=1]{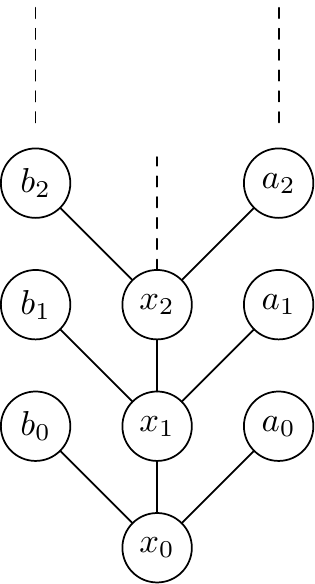}
\caption{\label{fig:nocap}An example in which condition~(\ref{eq:cap}) of
Theorem~\ref{th:antisup} is not satisfied, even assuming that existence of meets: 
$\downset\{\?a_i\mid i\in\N\?\}\cap\downset\{b_i\mid i\in\N\?\}=\{x_i\mid i\in\N\?\}$,
but the latter set is not a lower set generated by an
antichain.}
\end{figure}

Note that Condition~(\ref{eq:cap}) is weaker than the ACC (consider, for
example, the case in which $P$ is an ascending chain), and it is not tautological, 
as the example in Figure~\ref{fig:nocap} shows.
Moreover, even in the best
case of Theorem~\ref{th:antiinf} (i.e., $P$ a $\sqcap$-semilattice) $\A(P)$ may
not be complete. Consider the poset 
$c_{1}\sqless c_{2}\sqless c_{3}\sqless \cdots\sqless c_{k}\sqless \cdots\sqless c_{-k}\sqless \cdots\sqless c_{-3}\sqless c_{-2}\sqless c_{-1}$.
Clearly, $\bigsqcap_{i<0} \sing{c_i}$ and  $\bigsqcup_{i>0}
\sing{c_i}$ do not exist in $\A(P)$.

Under the ACC, however, one can prove much more:
\begin{theorem}[\protect{\cite[Corollary 10.5]{ErnEO}}]
	\label{thm:alexandroff}
	If $P$ satisfies the ACC, the order embedding $\downset(-):\A(P)\to
	\L(P)$ is an isomorphism.
\end{theorem}
Thus, in case $P$ satisfies the ACC the antichain completion of $P$
enjoys all strong properties of the Alexandrov completion of $P$. Alternatively, the
previous theorem shows that in the ACC case the antichain completion is
a handy representation of the Alexandrov completion.

\smallskip
\noindent{\bf Remark.} Theorems~\ref{th:antisup} and~\ref{th:antiinf} remain true for
the set of \emph{finite} antichains; moreover, in the case of finite antichains the hypothesis of
$P$ being a $\sqcap$-semilattice becomes sufficient for obtaining a lattice, because the 
set $\{\?a\sqcap b\mid a \in A, b\in B\?\}$ is finite and its maximal elements provide the finite antichain
$C$ that makes condition~(\ref{eq:cap}) true.\footnote{The hypothesis is not necessary, 
though. Consider the case of two incomparable elements $a$ and $b$ both smaller than
two incomparable elements $x$ and $y$: $x\sqcap y$ does not exist, but 
the set of finite antichains forms nonetheless a lattice.} The case of finite
antichains is interesting from a computational point of view, as discussed in~\cite{BoVEOLAMIS}.
Moreover, every finite-$\lor$-preserving map from the poset of finite antichains to a finitely
complete $\lor$-semilattice factors uniquely through the embedding
$\downset(-)$:
in other words, finite antichains are (isomorphic to) the free finitely
complete $\lor$-semilattice over their base poset~\cite{RibOSAOS}.

\section{Atoms and coatoms}

Let us first characterize the atoms of our completions:
\begin{proposition}
	Given a poset $P=(P,\sqleq)$, the atoms of the Alexandrov completion
	$\L(P)$ are exactly the lower sets of the form $\downset\sing x$, where
	$x$ is a minimal element of $P$; the atoms of the antichain completion $\A(P)$ are exactly the
	antichains of the form $\sing x$, where $x$ is a minimal element of $P$.
\end{proposition}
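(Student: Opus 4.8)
The plan is to treat the two completions separately, by nearly identical arguments, and in each case to prove both inclusions of the claimed characterization. The observations I would record at the outset are that the bottom of $\L(P)$ is the empty lower set $\emptyset$ and the bottom of $\A(P)$ is the empty antichain, so that, by the definition of atom, an atom is precisely a \emph{minimal nonempty} element of the relevant poset (a minimal nonempty lower set, resp.\ a minimal nonempty antichain). I would also note the elementary fact that, for $x\in P$, the set $\downset\sing x=\{\?y\mid y\sqleq x\?\}$ collapses to the singleton $\sing x$ exactly when $x$ is minimal in $P$.

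For $\L(P)$ I would first show sufficiency: if $x$ is minimal then $\downset\sing x=\sing x$ is a one-element lower set, and any nonempty lower set contained in a singleton must equal it, so $\downset\sing x$ is a minimal nonempty lower set, i.e.\ an atom. For necessity, let $L$ be an atom and pick any $x\in L$; since $L$ is a lower set we get $\downset\sing x\subseteq L$, and as $\downset\sing x$ is nonempty the minimality of $L$ forces $\downset\sing x=L$. It then remains to see that $x$ is minimal: if some $y\sqless x$ existed, then $\downset\sing y$ would be a nonempty lower set with $\downset\sing y\subsetneq\downset\sing x=L$ (it omits $x$, since $x\not\sqleq y$), contradicting that $L$ is an atom. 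Hence $L=\downset\sing x$ with $x$ minimal.

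For $\A(P)$ I would argue in the same spirit, using that the order~(\ref{eq:alexorder}) makes $\sing x\leq A$ hold for every $x\in A$. If $x$ is minimal then $\downset\sing x=\sing x$, so any nonempty $B\leq\sing x$ satisfies $\downset B\subseteq\sing x$ with $\downset B$ nonempty, whence $\downset B=\sing x$ and $B=\sing x$; thus $\sing x$ is an atom. Conversely, given an atom $A$, choose $x\in A$: then $\sing x\leq A$, and since $\sing x$ is a nonempty antichain the minimality of $A$ yields $A=\sing x$; the same descent as before (now using that the embedding $\downset(-)$ reflects the strict order, so $y\sqless x$ would give the nonempty $\sing y< \sing x=A$) shows that $x$ must be minimal.

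The only step requiring a little care—and hence the place I would expect to be the main, and rather minor, obstacle—is the necessity direction, where one must rule out that an atom be some other minimal nonempty lower set or antichain and instead show it is forced to be principal over a minimal element. The crux is that from an \emph{arbitrary} element $x$ of the atom one can always descend to the nonempty sub-object $\downset\sing x$ (resp.\ $\sing x$): atomicity then both pins the atom down to a single principal object and, by a further descent whenever $x$ is not minimal, forbids $x$ from having anything strictly below it. Everything else is a routine unwinding of the definitions, so I would not anticipate any genuine difficulty.
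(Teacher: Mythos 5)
Your proof is correct and follows essentially the same elementary route as the paper: a direct verification from the definitions that atoms are exactly the principal objects over minimal elements. The only (immaterial) difference is in the necessity direction, where the paper descends from a multi-element atom $A$ to the smaller nonempty lower set $A\setminus\upset\sing x$ for a non-minimum $x\in A$, while you descend to the principal ideal $\downset\sing x$ of an arbitrary $x\in A$ and then once more to $\downset\sing y$ for $y\sqless x$; both descents are valid and equally routine.
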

\begin{proof}
Consider an atom $A\subseteq P$ of $\L(P)$. If $A\neq \emptyset$
contains more than one element, pick $x\in A$ so that it is not the minimum of
$A$; note that $A \setminus\upset\sing x$ is still a lower set. Then,
$\emptyset\subset A\setminus\upset\sing x\subset A$, contradicting the fact that
$A$ is an atom. In the opposite direction, we just note that for a
 minimal $x\in P$ the set $\downset \{\?x\?\}$ contains just $x$, and thus covers $\emptyset$.
 The proof for $\A(P)$ is analogous.
\end{proof}

Since in a poset with a least element but no atoms every non-bottom element
is the start of an infinite descending chain, we have the following:
\begin{corollary}
	Consider a poset $P=(P,\sqleq)$. If $P$ has no minimal elements then
	each non-bottom element of $\A(P)$ or $\L(P)$ is the
	start of an infinite descending chain.
\end{corollary}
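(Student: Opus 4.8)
The plan is to derive the corollary as an immediate consequence of the preceding Proposition together with the elementary order-theoretic fact quoted just before the statement, namely that in a poset with a least element but no atoms every non-bottom element starts an infinite descending chain. Concretely, I would reduce the claim to checking that each of $\L(P)$ and $\A(P)$ satisfies the hypotheses of that fact --- that each is a poset with a least element and with no atoms --- and then invoke the standard extraction argument that produces the chain.

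First I would verify that both completions have a bottom. In $\L(P)$ the empty set is a lower set and is contained in every other lower set, so $\emptyset$ is the least element; in $\A(P)$ the empty antichain satisfies $\downset\emptyset=\emptyset\subseteq\downset B$ for every $B$, so it is the least element there as well. Next I would record that neither completion has an atom: by the Proposition the atoms of $\L(P)$ are exactly the sets $\downset\sing x$ with $x$ minimal in $P$, and the atoms of $\A(P)$ are exactly the singletons $\sing x$ with $x$ minimal in $P$. Since by hypothesis $P$ has no minimal elements, there is no such $x$, and hence no atom in either lattice.

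It then remains to run the extraction argument in each lattice. Given a non-bottom element $z_0$, the absence of atoms means $z_0$ is not an atom, so the defining implication fails and there is a $z_1$ with $0<z_1<z_0$; as $z_1$ is again non-bottom and not an atom, there is a $z_2$ with $0<z_2<z_1$, and iterating yields an infinite descending chain $z_0>z_1>z_2>\cdots$ lying strictly above the bottom. Applying this separately in $\L(P)$ and in $\A(P)$ gives the corollary.

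The content here is genuinely light, so I do not expect a real obstacle; the only points demanding attention are the two verifications above --- that each completion actually has a bottom, and that ``no minimal elements in $P$'' translates, via the Proposition, into ``no atoms'' --- together with the small bookkeeping of ensuring that every element produced by the extraction is still non-bottom (which holds because each is constructed to lie strictly above $0$), so that the construction never terminates.
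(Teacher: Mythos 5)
Your proposal is correct and matches the paper's (implicit) argument exactly: the paper derives the corollary from the preceding proposition plus the remark that a poset with a bottom and no atoms has an infinite descending chain below every non-bottom element, which is precisely the reduction and extraction you carry out. The only detail worth retaining is your check that the non-atom witness $y$ with $0<y<z_0$ actually exists (the implication defining an atom is non-vacuous since $z_0$ itself satisfies $0<z_0\leq z_0$), which you handle correctly.
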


\smallskip
We now turn our attention to coatoms. Here we consider a scenario where $P$
satisfies the ACC (hence we state it directly for $\A(P)$) and is
coatomistic.

\begin{proposition}
	Let $P=(P,\sqleq)$ be a coatomistic poset satisfying the ACC, let $1_P$
	be its top element, and let $\operatorname{coat}(P)$ be the set of its coatoms.
	Then, the top element of $\A(P)$ is $\sing{1_P}$, 
	and the only coatom is $\operatorname{coat}(P)$. 
	Moreover, the elements $A$ of $\A(P)$ from which an 
	infinite ascending chain starts are exactly
	those for which $\operatorname{coat}(P)\setminus A$ is infinite, 
	that is, such that there are infinite	coatoms of $P$ not in $A$.
\end{proposition}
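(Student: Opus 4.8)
The plan is to pass everything through the Alexandrov completion: by Theorem~\ref{thm:alexandroff} the map $\downset(-)\colon\A(P)\to\L(P)$ is an isomorphism, so I would identify an antichain $A$ with the lower set $\downset A$, read joins and meets as unions and intersections of lower sets, and view an ascending chain of antichains as a strictly increasing chain of lower sets. Parts one and two are then short. Since $x\sqleq 1_P$ for every $x$, we get $\downset\sing{1_P}=P$, the top of $\L(P)$, so $\sing{1_P}$ is the top of $\A(P)$. For the coatom, the hypothesis that $P$ is coatomistic yields $\downset\operatorname{coat}(P)=P\setminus\sing{1_P}$, because every $x\neq 1_P$ is the meet of a nonempty set of coatoms and therefore lies below some coatom (a possible bottom element lies below every coatom). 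This lower set is a maximal proper one, as only $1_P$ is missing, so $\operatorname{coat}(P)$ is a coatom. Conversely, if $B$ is any coatom, then $\downset B$ is a maximal proper lower set, necessarily with $1_P\notin\downset B$; were some $x\neq 1_P$ also missing, then $\downset B\cup\downset\sing x$ would be a strictly larger proper lower set (it still omits $1_P$), contradicting maximality. Hence $\downset B=P\setminus\sing{1_P}$ and $B=\operatorname{coat}(P)$, so the coatom is unique.

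The substance is in part three. First I would record that for every $A\neq\sing{1_P}$ a coatom $c$ belongs to $\downset A$ precisely when $c\in A$: every member of such an $A$ differs from $1_P$, and since no element lies strictly between $c$ and $1_P$, the relation $c\sqleq a$ with $a\in A$ forces $c=a$. Thus the coatoms lying outside $\downset A$ are exactly those of $\operatorname{coat}(P)\setminus A$, and the claim reduces to deciding, inside the interval $[\downset A,P]$ of $\L(P)$, for which $A$ an infinite ascending chain starts.

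For the direction assuming infinitely many coatoms $c_0,c_1,c_2,\dots$ outside $\downset A$, I would feed them in one at a time: the lower sets $L_n=\downset A\cup\downset\sing{c_0}\cup\cdots\cup\downset\sing{c_n}$ increase strictly, since distinct coatoms are pairwise incomparable and none lies below $A$, so each $c_{n+1}$ is genuinely new; this is an infinite ascending chain starting at $A$. For the converse I would argue by contraposition, assuming only finitely many coatoms $c_0,\dots,c_{k-1}$ lie outside $\downset A$. Here the coatomistic hypothesis does the real work: any $x\in P\setminus\downset A$ with $x\neq 0,1_P$ is a meet $x=\bigwedge S$ of coatoms, and no member of $S$ can belong to $\downset A$ (otherwise $x$ would too), so $S\subseteq\sing{c_0,\dots,c_{k-1}}$ and $x$ is pinned down by one of $2^k$ subsets. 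Therefore $P\setminus\downset A$ is finite, at most finitely many lower sets lie between $\downset A$ and $P$, and no infinite ascending chain can start at $A$.

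I expect the converse to be the crux, precisely at the point where finiteness of the coatom set outside $\downset A$ is upgraded, via the coatomistic property, to finiteness of the whole set $P\setminus\downset A$. Without that hypothesis one could place an infinite antichain strictly beneath a single coatom, producing an infinite ascending chain even though only one coatom lies outside $\downset A$; being a meet of coatoms is exactly what prevents an element from hiding below a coatom while escaping the control of the finitely many relevant coatoms. The only value needing separate comment is $A=\sing{1_P}$, which starts no proper ascending chain and has every coatom in its down-set, so it is consistently excluded once ``coatoms not in $A$'' is read as ``coatoms not in $\downset A$''; the two readings agree for all other $A$.
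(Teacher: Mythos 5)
Your proof is correct, and while the easy parts (top, unique coatom, and the forward direction of the chain characterization, where you add the coatoms of $N_A=\operatorname{coat}(P)\setminus A$ one at a time) match the paper, your argument for the converse is genuinely different from --- and tidier than --- the one in the paper. The paper argues along a hypothetical infinite ascending chain: it classifies what can happen at each step (a new element is added, or an existing non-coatom is replaced), bounds the first kind of event by noting new elements must be meets of coatoms in the finite set $N_A$, and bounds the second kind per element by invoking the ACC; this requires an informal ``without loss of generality'' about the structure of the chain steps. You instead use the same key observation (every element of $P\setminus\downset A$ other than the extremes is the meet of a subset of $N_A$, since a coatom in $\downset A$ would drag the meet into $\downset A$) to bound $|P\setminus\downset A|$ by roughly $2^{|N_A|}$, so that the whole interval $[\downset A,P]$ of $\L(P)$ is finite and no infinite ascending chain can fit in it. This buys you a stronger conclusion (the up-set of $A$ is finite, not merely chain-finite), avoids the event-counting bookkeeping, and does not even use the ACC in this direction except through the identification $\A(P)\cong\L(P)$ --- and since $\downset(-)$ is always an order embedding, even that could be dispensed with. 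Your side remark that $A=\sing{1_P}$ must be read as having no missing coatoms (interpreting ``coatoms not in $A$'' as ``coatoms not in $\downset A$'') is a legitimate point of precision that the paper's proof passes over silently.
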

\begin{proof}
Note that $\operatorname{coat}(P)$ is by definition an antichain. Since every
element of $P$ except for $1_P$ is a meet of coatoms, $\operatorname{coat}(P)$
is greater than every other element of $\A(P)$ except for the top
$\sing{1_P}$, and thus the only coatom.

Now, let $N_A=\operatorname{coat}(P)\setminus A$ be the set of coatoms not in
$A$. If $N_A$ is infinite, say $N_A=\{\?c_0,c_1,c_2,\ldots\?\}$,
the sequence
\[
	A < A \vee \{\?c_0\?\} < A \vee \{\?c_0,c_1\?\} <
	\dots
\]
is an infinite ascending chain (the elements of the sequence are all distinct
because the $c_i$'s are always maximal).

Suppose by contradiction that $N_A$ is finite and that
\[
	A=A_0 < A_1 < A_2 < \dots
\]
is an infinite ascending chain starting from $A$. Note that the set of
non-coatoms in $A$ must be finite, as they must be the meet of elements from
$N_A$.

We can assume without loss of generality that at each step of the chain one of
the following two events happens:
\begin{itemize}
  \item a new element is added to $A$;
  \item a non-coatom in $A$ is substituted by a smaller element (and because
  of this, possibly some other elements are dropped from $A$).
\end{itemize}
Note that the first event can happen only a finite number of times, as such a
new element must be the meet of coatoms in $N_A$. Moreover, the second
event can happen only a finite number of times for a given element, or the ACC
would be violated. This contradicts the existence of the chain.
\end{proof}

\section{The \CCB lattice}

Let $O=(O,\sqleq)$ be a  totally ordered set. An \emph{interval} of $O$ is
any subset $I \subseteq O$ such that, for all $x,y,z \in O$, if $x\sqleq z\sqleq y$ and $x,y\in I$ then $z \in
I$. The set of all \emph{finite} intervals of $O$ is denoted by
$\I_O$.
If $\ell\sqleq r$, the interval $[\ell\..r]$ is nonempty, 
$\ell$ is its least element and
$r$ its greatest element; $\ell$ ($r$, respectively) will be called
the \emph{left} (\emph{right}, resp.) extreme of the interval.

The following proposition shows some properties of the set of
finite intervals ordered by \emph{reverse} inclusion $(\I_O,\supseteq)$ that are
relevant to its antichain completion:
\begin{proposition}
\label{prop:lfin}
Let $O=(O,\sqleq)$ be a totally ordered set. Then, the poset
$(\I_O,\supseteq)$ enjoys the following properties:
\begin{enumerate}
  \item it has a minimum iff $O$ is finite (and the minimum is $O$); if $O$ is
  infinite, $(\I_O,\supseteq)$ has no minimal element;
  \item\label{lfin:acc} it satisfies the ACC;
  \item \label{lfin:semsup} it is a $\vee$-semilattice, and the join of $I$ and
  $J$ is $I\cap J$;
  \item\label{lfin:semi} it is a $\wedge$-semilattice (and thus a lattice) iff
  $O$ is locally finite\footnote{A poset $(O,\sqleq)$ is locally finite iff for every $x,y\in O$ the interval $[x\..y]$ is
finite.},
  and the meet of $[\ell\.. r]$ and $[\ell'\.. r']$ is
\[[\ell\sqcap\ell'\..r\sqcup r'];\]
  \item\label{lfin:coat} it is coatomistic;
  \item\label{lfin:dimtwo} it is a subposet of the
  Cartesian product of two totally ordered sets, hence its dimension is at most
  $2$; it is exactly $2$ iff $|O|>1$.
\end{enumerate}
\end{proposition}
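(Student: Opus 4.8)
The plan is to reduce every clause to the elementary ``two-extreme'' description of a finite interval. Writing a nonempty finite interval as $[\ell\..r]$, reverse inclusion reads
\[
[\ell\..r]\leq[\ell'\..r']\quad\Longleftrightarrow\quad[\ell\..r]\supseteq[\ell'\..r']\quad\Longleftrightarrow\quad\ell\sqleq\ell'\ \text{and}\ r'\sqleq r,
\]
so that in $(\I_O,\supseteq)$ ``smaller'' means ``wider''; the empty interval is the top and $O$ itself is the only candidate bottom. With this dictionary the six claims are mostly bookkeeping, and I will point out the two steps that carry real content.

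For (1) the minimum is the $\supseteq$-least, i.e.\ $\subseteq$-largest, finite interval; the only set containing every interval is $O$, which belongs to $\I_O$ exactly when $O$ is finite. For infinite $O$ I would show no interval is $\supseteq$-minimal by enlarging it: a finite $[\ell\..r]$ admits a strictly wider finite interval by adjoining a further point on whichever side is unbounded. For (2), an ascending chain of $(\I_O,\supseteq)$ is a strictly $\subseteq$-descending chain of finite intervals, and since the first term is finite and each step drops at least one point, the chain stops; this is exactly the ACC. For (3), the intersection of two intervals of a chain is again an interval and is finite, and the upper bounds of $I,J$ are precisely the intervals contained in both, i.e.\ the subsets of $I\cap J$; hence $I\cap J\in\I_O$ is their least upper bound.

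The first genuine case split is (4). Common lower bounds of $[\ell\..r]$ and $[\ell'\..r']$ are the finite intervals containing their union, and the $\subseteq$-smallest such is $[\ell\sqcap\ell'\..r\sqcup r']$; when $O$ is locally finite this is finite, hence is the meet, and with (3) we obtain a lattice. Conversely, if $O$ is not locally finite choose $x\sqless y$ with $[x\..y]$ infinite: then $\sing x$ and $\sing y$ lie in $\I_O$, yet any common lower bound must contain $x$ and $y$ and therefore all of $[x\..y]$, so no finite lower bound exists and the poset is not a $\wedge$-semilattice. For (5) the top is $\emptyset$ and the coatoms are exactly the singletons (the minimal nonempty intervals); each nonempty $[\ell\..r]$ equals the meet $\sing\ell\wedge\sing r$ of two coatoms---a meet that exists since $[\ell\..r]$ is finite---and $\emptyset$ is the empty meet, so $(\I_O,\supseteq)$ is coatomistic.

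Finally, for (6) I would send each nonempty $[\ell\..r]$ to the pair $(\ell,r)\in O\times O^\op$; by the displayed equivalence this is an order embedding into the product of the two chains $O$ and $O^\op$, and adjoining the top $\emptyset$ does not raise the dimension, so $\dim(\I_O,\supseteq)\leq2$. A poset has dimension at most $1$ only when it is a chain; for $|O|>1$ two distinct points give incomparable singletons $\sing x,\sing y$, so the poset is not a chain and the dimension is exactly $2$, whereas for $|O|\leq1$ it is a chain of length at most one. I expect the main obstacle to be the converse half of (4)---reading off the failure of the $\wedge$-semilattice property from a single infinite interval via the pair $\sing x,\sing y$ is the one step needing an idea rather than translation---together with keeping the boundary conventions straight, since the empty interval must simultaneously serve as the top, as the unit for $\wedge$, and as the element the embedding of (6) must absorb without spoiling the bound.
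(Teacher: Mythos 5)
Your proposal is correct and follows essentially the same route as the paper: the same reduction of reverse inclusion to the two-extreme order $\ell\sqleq\ell'$, $r'\sqleq r$, the same counterexample $\sing x,\sing y$ with $[x\..y]$ infinite for the converse of item~(\ref{lfin:semi}), the identification of coatoms with singletons and of $[\ell\..r]$ as $[\ell]\wedge[r]$ for item~(\ref{lfin:coat}), and the same embedding $[\ell\..r]\mapsto(\ell,r)$ into $(O,\sqleq)\times(O,\sqgeq)$ for item~(\ref{lfin:dimtwo}). You are in fact slightly more careful than the paper on the boundary points (the empty interval in the dimension argument and the ``exactly $2$'' half of the last item), but there is no substantive difference.
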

\begin{proof}
The first item is trivial. For (\ref{lfin:acc}) an infinite ascending
chain would be a sequence of finite intervals of the form $I_0\supset I_1\supset I_2\supset
\cdots$.

\noindent(\ref{lfin:semsup}) Trivial.

\noindent(\ref{lfin:semi}) If $O$ is not locally finite, there
exists an interval $[\ell\..r]$ of infinite cardinality, and
there exists no common $\supseteq$-lower bound to $\singint\ell$ and $\singint
r$ in $\I_O$. So $(\I_O,\supseteq)$ cannot be a
$\wedge$-semilattice.
On the other hand, if $O$ is locally finite the interval in the statement
is clearly the smallest finite interval containing both 
$[\ell\.. r]$ and $[\ell'\.. r']$.

\noindent(\ref{lfin:coat}) $\emptyset$ is the largest element of
$\I_O$. The elements just below it (the coatoms) are exactly the singleton
intervals, and each finite interval $[\ell\..r]$ is the meet of $[\ell]$ and
$[r]$.

\noindent(\ref{lfin:dimtwo}) Consider the poset $P_O=(O,\sqleq) \times
(O,\sqgeq)$; the injection $\iota: \I_O \to P_O$ sending $[\ell\..r]$ to
the pair $(\ell,r)$ respects the order. In fact, $\iota([\ell\..r]) \sqleq \iota([\ell'\..r'])$ if
and only if $\ell\sqleq\ell'$ and $r\sqgeq r'$, which happens precisely when
$[\ell\..r]\supseteq [\ell'\..r']$. Since $P_O$ is a subposet of the Cartesian
product of two total orders, its dimension is at most 2~\cite{OreTG}.
\end{proof}

Justified by the previous proposition, we assume from now on that
$O=(O,\sqleq)$ is a fixed locally finite, totally ordered set, so
$(\I_O,\supseteq)$ is a $\wedge$-semilattice, and we can use
Theorem~\ref{th:antiinf} to compute meets easily. Locally finite,
totally ordered sets correspond, up to isomorphisms, to the subsets of $({\mathbf Z},\leq)$.

The fact that $O$ is locally finite implies that it is strongly atomic and
coatomic. In particular, for every $x \in O$, either $x$ is the
greatest element of $O$ or there exists a single element $x' \in O$ such that $x\sqless x'$ and
$x\sqleq y\sqleq x'$ implies $y \in\sing{x,x'}$: the element $x'$
is called the \emph{successor} of $x$ and, when it exists, it is denoted by $x+1$.
Similarly, either $x$ is the least element of $O$ or there exists a single
element $x''\in O$ such that $x''\sqless x$ and $x''\sqleq y\sqleq x$ implies $y \in
\sing{x,x''}$: the element $x''$ is called the \emph{predecessor} of $x$ and,
when it exists, it is denoted by $x-1$.

Note that with a slight abuse of notation we will write
$[\ell+1\..\rightarrow)$ even when $\ell$ has no successor to mean the empty set. Analogously for  
$(\leftarrow\.. r-1]$ when $r$ has no predecessor. This convention makes it possible
to avoid the introduction of open or semi-open intervals.

We are now ready to state our main definition:
\begin{definition}
\label{def:ccbl}
Given a locally finite, totally ordered set $O$, the \emph{\CCB
lattice on $O$}, denoted by $\E_O$, is the antichain completion of $(\I_O,\supseteq)$.
\end{definition}

The above definition says that $\E_O$ is the set of antichains of
finite intervals with respect to inclusion, with partial order given by
\[
  A \leq B \qquad\text{iff}\qquad \forall I \in A \,\, \exists J \in
  B \quad J \subseteq I,
\]
which is just an explicit restatement of equation~(\ref{eq:alexorder}).
In other words, $A\leq B$ if for every interval $I$ in $A$
there is some better interval (witness) $J$ in
$B$, where ``better'' means that the new interval $J$ is contained in $I$. This
corresponds to the intuition that smaller intervals are
more \emph{precise}, and thus convey more information.

In fact, the lattice $\E_O$ extends the definition provided by Clarke,
Cormack and Burkowski~\cite{CCBASTSFI} in two ways: first, our base set can contain
also infinite (ascending, descending or bidirectional) chains; second,
we allow for  the empty interval. The first generalization makes it possible to
avoid specifying the document length, allowing for infinite virtual
documents, too. The second one gains us a top element that is useful in modeling
negation.\footnote{It is interesting to note that in our formulation
$\E_\emptyset$ is the Boolean lattice (containing only ``false'' and ``true''):
if there is no spatial information, minimal-interval semantics reduces to the Boolean case.}

We remark that in our setting each antichain can be totally ordered by left
(or, equivalently, right) extreme. This ordering (which we will call \emph{natural}) is locally finite, so it makes
sense to talk about the predecessor or successor of an interval in an
antichain. An interval that has no successor (predecessor, respectively) in
an antichain will be called the \emph{last} (\emph{first}, respectively)
interval of the antichain. With $\bigcup A$ we denote the union of
the intervals in $A$: in other words, the set of elements of $O$ covered by some
interval in $A$.

We usually write $\E_n$ instead of $\E_{\{\?0,1,\ldots,n-1\?\}}$. It is known
that $|\E_n|=C_{n+1}+1$, where $C_n$ is the $n$-th Catalan number~\cite[item 183]{StaCN}.\footnote{There is an off-by-one with respect to~\cite{StaCN}, due
to the fact the author considers nonempty intervals only.} There is a simple,
recursive CAT (Constant Amortized Time) algorithm (Algorithm~\ref{alg:enum})
that lists all antichains except for $\sing\emptyset$: the algorithm greedily
adds to a base antichain $A$ a new interval, assuming that the first interval in
natural order that can be added to $A$ has left margin $\ell$ and right margin
$r$. The correctness proof is an easy induction.

\begin{Algorithm}
\begin{tabbing}
\setcounter{prgline}{0}
\hspace{0.3cm} \= \hspace{0.3cm} \= \hspace{0.3cm} \= \hspace{0.3cm} \=
\hspace{0.3cm} \=\kill\\
\pl\>\PROCEDURE enumerate($A,\ell,r$) \BEGIN\\
\pl\>\>emit($A$);\\
\pl\>\>\FOR $i=\ell$ \TO $n-1$ \DO\\
\pl\>\>\>\FOR $j=\max\{\?i,r\?\}$ \TO $n-1$ \DO\\
\pl\>\>\>\>enumerate($A\cup\sing{[i\..j]},i+1,j+1$)\\
\pl\>\>\>\OD\\
\pl\>\>\OD\\
\pl\>\END.
\end{tabbing}
\caption{\label{alg:enum}The CAT algorithm enumerating all antichains
(except for $1$).
The base call is enumerate($\emptyset, 0, 0)$.}
\end{Algorithm}

The following theorem instantiates the results of Section~\ref{sec:alex} to $\E_O$,
using Proposition~\ref{prop:lfin}. The 
first two
items show that $\E_n$ is exactly the lattice defined 
in~\cite{CCBASTSFI}.
\begin{theorem}
$\E_O$ is a strongly coatomic, superalgebraic, completely distributive lattice.
Moreover, for any $A,B \in \E_O$, we have that:
 \begin{enumerate}
 \item\label{enu:sup} $A \vee B$ is the set of all
 $\subseteq$-minimal elements of $A \cup B$;
 \item\label{enu:inf} $A \wedge B$ is the set of all
 $\subseteq$-minimal elements of the set 
 \[
 	\{\?[\min\{\?\ell,\ell'\?\}\..\max\{\?r,r'\?\}] \mid [\ell\..r] \in A,
 	[\ell'\..r'] \in B\?\};
 \]
 \item\label{enu:bottom} the least element $0$ of $\E_O$ is $\emptyset$; 
 \item\label{enu:top} the greatest element $1$ of $\E_O$ is $\sing\emptyset$; no other element of
 $\E_O$ contains $\emptyset$;
 \item\label{enu:coatom} $\E_O$ has exactly one coatom, $\{\?\singint x \mid x \in O\?\}$, denoted by
 $1^-$;
 \item\label{enu:atom} if $O$ is finite, then $\E_O$ has exactly one atom, $\sing O$; no other
 element of $\E_O$ contains $O$;
 \item\label{enu:atominf} if $O$ is infinite, $0$ is not covered by any element
 (i.e., $\E_O$ has no atoms), and for each $A \neq 0$ there is an infinite
 descending chain starting at $A$;
 \item\label{enu:asc}  if $O$ is infinite, if $A\neq 1$ there is an infinite
 ascending chain starting at $A$ iff there are infinite singleton intervals
 not in $A$.
 \end{enumerate}
\end{theorem}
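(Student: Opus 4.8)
The plan is to read every assertion off the general machinery of Sections~\ref{sec:alex} and the antichain-completion section, feeding it the properties of $(\I_O,\supseteq)$ collected in Proposition~\ref{prop:lfin}; there is essentially no new content, only a careful translation between the $\supseteq$-order on intervals and ordinary inclusion. I would first dispatch the structural header: by Proposition~\ref{prop:lfin}(\ref{lfin:acc}) the base poset $(\I_O,\supseteq)$ satisfies the ACC, so Theorem~\ref{thm:alexandroff} gives an isomorphism $\E_O=\A(\I_O)\cong\L(\I_O)$, and \cite[Lemma 5.12]{ErnABCOT} then says that $\L(\I_O)$, hence $\E_O$, is a strongly coatomic, superalgebraic, completely distributive (in particular complete) lattice.

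Next come the two operation formulas. Item~(\ref{enu:sup}) is immediate from Theorem~\ref{th:antisup}: $A\vee B$ is the set of maximal elements of $A\cup B$ in $(\I_O,\supseteq)$, and ``maximal for $\supseteq$'' means ``$\subseteq$-minimal''. For item~(\ref{enu:inf}) I first note that $\E_O$ is a lattice (from the isomorphism above, or because the ACC forces condition~(\ref{eq:cap})) and that $(\I_O,\supseteq)$ is a $\sqcap$-semilattice by Proposition~\ref{prop:lfin}(\ref{lfin:semi}); the ``in particular'' clause of Theorem~\ref{th:antiinf} then computes $A\wedge B$ as the maximal elements of $\{\?a\sqcap b\mid a\in A,\ b\in B\?\}$, where by the same Proposition~\ref{prop:lfin}(\ref{lfin:semi}) the meet $[\ell\..r]\sqcap[\ell'\..r']$ in $(\I_O,\supseteq)$ is $[\min\{\?\ell,\ell'\?\}\..\max\{\?r,r'\?\}]$, and once more ``maximal for $\supseteq$'' reads as ``$\subseteq$-minimal''. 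This yields exactly the stated set.

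Then the distinguished elements and the chain statements. The bottom is the empty antichain, whose generated lower set is empty, giving item~(\ref{enu:bottom}). For the top and the coatom I would invoke the coatom characterisation of the previous section, applicable because $(\I_O,\supseteq)$ is coatomistic (Proposition~\ref{prop:lfin}(\ref{lfin:coat})) and satisfies the ACC: its top $1_{\I_O}$ is the $\supseteq$-largest, i.e.\ $\subseteq$-smallest, interval $\emptyset$, so the top of $\E_O$ is $\sing\emptyset$ (item~(\ref{enu:top})), and its unique coatom is the antichain of all coatoms of $(\I_O,\supseteq)$, which by Proposition~\ref{prop:lfin}(\ref{lfin:coat}) are the singleton intervals, giving item~(\ref{enu:coatom}). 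The uniqueness remarks (no other element contains $\emptyset$, resp.\ $O$) are forced by the antichain condition, since $\emptyset$ (resp.\ $O$) is $\subseteq$-comparable to every interval and so can share an antichain with no other. The atom characterisation then handles items~(\ref{enu:atom}) and~(\ref{enu:atominf}): the atoms of $\A(\I_O)$ are the $\sing x$ with $x$ minimal in $(\I_O,\supseteq)$, i.e.\ $\subseteq$-maximal intervals, which by Proposition~\ref{prop:lfin}(1) is $O$ when $O$ is finite (unique atom $\sing O$) and does not exist when $O$ is infinite (no atoms), with the accompanying corollary supplying the infinite descending chains. Finally item~(\ref{enu:asc}) is the last clause of the coatom characterisation specialised to $\operatorname{coat}(\I_O)=\{\?\singint x\mid x\in O\?\}$: an infinite ascending chain starts at $A\neq1$ exactly when $\operatorname{coat}(\I_O)\setminus A$ is infinite, i.e.\ infinitely many singleton intervals are missing from $A$.

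The one point deserving care, and hence the only ``obstacle'' worth flagging, is bookkeeping the order reversal: every extremal notion of the abstract theorems ($\vee$/$\wedge$, atom/coatom, top/bottom, minimal/maximal) must be transported through $(\I_O,\supseteq)$, where it flips relative to inclusion, so that ``maximal interval for $\supseteq$'' becomes ``$\subseteq$-minimal'', the top interval is $\emptyset$, the minimal element is the largest interval, and so on. Once this dictionary is fixed, each item reduces to a one-line instantiation of a previously established result.
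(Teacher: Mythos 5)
Your proposal is correct and takes essentially the same route as the paper, which gives no separate proof of this theorem but explicitly presents it as an instantiation of the general results on antichain/Alexandrov completions via Proposition~\ref{prop:lfin}; your item-by-item translation (including the care with the $\supseteq$-versus-$\subseteq$ reversal and the observation that $\emptyset$, resp.\ $O$, is comparable to every interval) is exactly the intended argument.
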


\section{Normal forms}

\paragraph{$\lor$-representations.}
Theorem~\ref{th:antisup} has already provided the unique \ORrepr of an antichain
$A$ by \ORirr elements, which we restate in our
case:
\begin{theorem}
\label{th:orrepr}
Let $A \in \E_O$. Then,
\[
A = \bigvee_{I\in A} \{\?I\?\}.
\]
and this is the only irredundant \ORrepr of $A$ by 
 \ORirr elements of $\E_O$.
\end{theorem}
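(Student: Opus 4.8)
The plan is to derive this statement as a direct specialization of Theorem~\ref{th:antisup} to the base poset $P=(\I_O,\supseteq)$. By Definition~\ref{def:ccbl}, the lattice $\E_O$ is by construction the antichain completion $\A(\I_O,\supseteq)$, so the abstract ``elements'' $x$ of Theorem~\ref{th:antisup} become finite intervals $I\in\I_O$, and the abstract singleton antichains $\sing{x}$ become the singletons $\{\?I\?\}$. The whole task is therefore one of translation rather than of new argument.

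First I would note that Theorem~\ref{th:antisup} assumes nothing about $P$ beyond its being a poset; since $(\I_O,\supseteq)$ is certainly a poset, the theorem applies verbatim. It tells us that the \ORirr elements of $\A(\I_O,\supseteq)$ are exactly the singleton antichains, and that every $A$ admits the unique irredundant \ORrepr $A=\bigvee_{x\in A}\sing{x}$ by such elements. Unwinding the notation, an element of the antichain $A\in\E_O$ is a finite interval $I$, and $\bigvee_{I\in A}\{\?I\?\}$ is precisely the abstract join $\bigvee_{x\in A}\sing{x}$. This yields both the displayed identity and the claimed uniqueness of the representation among irredundant \ORrepr by \ORirr elements.

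The only point that requires any attention is confirming that the \ORirr elements of $\E_O$ are indeed the singleton antichains of intervals, but this identification is already part of the conclusion of Theorem~\ref{th:antisup} and needs no further work in the special case. Consequently I do not expect a genuine obstacle: all the substantive content was established in Theorem~\ref{th:antisup}, and the present theorem merely records its instantiation for the \CCB lattice $\E_O$.
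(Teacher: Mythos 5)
Your proposal matches the paper exactly: the paper presents this theorem as a mere restatement of Theorem~\ref{th:antisup} for the special case $P=(\I_O,\supseteq)$ and gives no separate proof. Your instantiation argument is correct and is essentially the same approach.
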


\paragraph{$\land$-representations.}
A \ANDrepr in terms of completely
\ANDirr elements is also easy to write, as explained in Section~\ref{sec:alex},
but we immediately meet a computational issue:
if $O$ is infinite, a \emph{finite} antichain cannot be represented by a
finite set of completely \ANDirr elements, as the $\wedge$ of any such set
contains infinite singleton intervals. Moreover, when $O$ is infinite
$\E_O$ is not strongly atomic, and thus there are elements without an
irredundant \ANDrepr by completely \ANDirr elements~\cite[Proposition~6.3]{CrDATL}.

As we did for \ORirr elements, we thus turn to the study of 
\ANDirr elements and of the associated $\wedge$-representations, which, as we
will see, are also unique and irredundant; moreover, such
representations will be finite for finite antichains, and they will make it
possible to describe the relative pseudo-complement in closed form.

\begin{definition}
We denote with $\Inf_O$ the set of all (finite and
infinite) intervals of $O$. If $I\in\Inf_O$, we let
$\cgen I$ denote the set $\{\?\singint x \mid x \not\in I\?\}$, that is, the set
of all singletons that are not contained in $I$.
\end{definition}

The following proposition shows that the antichains of the form $\cgen I$ are
precisely the \ANDirr elements:
\begin{proposition}
An element $A\neq 1$ of $\E_O$ is \ANDirr iff $A=\cgen I$ for some
$I\in\Inf_O$.
\end{proposition}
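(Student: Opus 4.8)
The plan is to transport the whole question into the Alexandrov completion. Since $(\I_O,\supseteq)$ satisfies the ACC by Proposition~\ref{prop:lfin}(\ref{lfin:acc}), Theorem~\ref{thm:alexandroff} gives an isomorphism $\downset(-)\colon\E_O\to\L(\I_O,\supseteq)$, so meets in $\E_O$ are intersections of lower sets and an antichain $A$ is \ANDirr exactly when $\downset A$ is meet-irreducible in $\L(\I_O,\supseteq)$. Writing $P=(\I_O,\supseteq)$ and $F_A=P\setminus\downset A$ for the complementary upper set, I first record two dictionary facts: $\downset A=\{J\in\I_O\mid J\supseteq a\text{ for some }a\in A\}$, so that $A\neq 1$ iff $F_A\neq\emptyset$; and a direct computation gives $\downset\cgen I=\{J\in\I_O\mid J\not\subseteq I\}$, i.e.\ $F_{\cgen I}=\{J\in\I_O\mid J\subseteq I\}$.

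The key reduction is: for $A\neq 1$, the element $A$ is \ANDirr iff $F_A$ is a \emph{filter} of $P$, by which I mean a nonempty upper set closed under the binary meets of $P$ (which by Proposition~\ref{prop:lfin}(\ref{lfin:semi}) send two finite intervals to the smallest finite interval containing both). For the forward implication I argue by contraposition: if $a,b\in F_A$ but their meet $c$ lies in $\downset A$, then $V_a=\downset A\cup\downset\sing a$ and $V_b=\downset A\cup\downset\sing b$ are lower sets strictly containing $\downset A$, yet $V_a\cap V_b=\downset A\cup(\downset\sing a\cap\downset\sing b)=\downset A\cup\downset\sing c=\downset A$, using that $\downset\sing a\cap\downset\sing b=\downset\sing c$ since $P$ is a meet-semilattice and that $c\in\downset A$; this exhibits $\downset A$ as a nontrivial intersection, so $A$ is reducible. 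For the converse, suppose $F_A$ is a filter and $\downset A=V_1\cap V_2$ with lower sets $V_i\supseteq\downset A$, and complement to $F_A=G_1\cup G_2$ with upper sets $G_i=P\setminus V_i$. If both $G_i\subsetneq F_A$, choose $a\in F_A\setminus G_1$ (hence $a\in G_2$) and $b\in F_A\setminus G_2$ (hence $b\in G_1$); their meet $c\in F_A$ lies in some $G_i$, and since $c\sqleq a$, $c\sqleq b$ and $G_i$ is an upper set, $c\in G_1$ forces $a\in G_1$ and $c\in G_2$ forces $b\in G_2$, a contradiction either way. Thus some $V_i=\downset A$ and $A$ is \ANDirr.

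It remains to identify the filters of $P=(\I_O,\supseteq)$, and I claim they are exactly the sets $F_I=\{J\in\I_O\mid J\subseteq I\}$ for $I\in\Inf_O$. Each $F_I$ is a filter: it is an upper set ($J'\subseteq J\subseteq I$ gives $J'\subseteq I$), it contains $\emptyset$, and it is meet-closed because the smallest finite interval containing two subintervals of $I$ lies in the convex set $I$ and is finite by local finiteness. Conversely, given a filter $F$, I set $I=\bigcup F$; meet-closedness forces $I$ to be convex (for $x\sqleq z\sqleq y$ in $O$ with $x,y\in I$, pick $J_1,J_2\in F$ with $x\in J_1$, $y\in J_2$, so their meet lies in $F$, contains $x$ and $y$, hence $z$), whence $I\in\Inf_O$; and applying the same observation to the endpoints of any finite $J\subseteq I$ yields a member of $F$ containing $J$, so $J\in F$ by the up-set property, giving $F=F_I$. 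Chaining the three steps, for $A\neq 1$ we obtain $A$ \ANDirr $\iff F_A$ a filter $\iff F_A=F_I=F_{\cgen I}$ for some $I\in\Inf_O\iff A=\cgen I$, the last step by injectivity of $\downset(-)$. The main obstacle I anticipate is this filter-identification step—specifically verifying that the union of a meet-closed family of finite intervals is genuinely convex and that the filter is faithfully recovered as $F_{\bigcup F}$, including the degenerate endpoints ($I=\emptyset$ giving the coatom $1^-$ and $I=O$ giving $0$), which is exactly where local finiteness and the explicit meet formula of Proposition~\ref{prop:lfin} are needed.
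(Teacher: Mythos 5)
Your proof is correct, but it takes a genuinely different route from the paper's. The paper argues directly on the antichain: for the forward direction it shows that any $A$ containing a non-singleton interval $[\ell\..r]$ splits as $(A'\vee\sing{[\ell]})\wedge(A'\vee\sing{[r]})$, and that an all-singleton $A$ whose complement $(\bigcup A)^c$ is not convex splits by adjoining singletons from two different convex components; for the converse it derives a contradiction from $\cgen I=A_1\wedge A_2$ by locating a subinterval of $I$ in the meet. You instead pass through the isomorphism $\E_O\cong\L(\I_O,\supseteq)$, reduce $\wedge$-irreducibility of $A$ to the complementary upper set $F_A$ being a filter (a general order-theoretic equivalence, which you prove correctly in both directions using that meets of lower sets are intersections and that $(\I_O,\supseteq)$ is a meet-semilattice), and then classify the filters of $(\I_O,\supseteq)$ as the sets $F_I=\{J\in\I_O\mid J\subseteq I\}$ with $I\in\Inf_O$, via the convexity of $\bigcup F$. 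Your approach buys conceptual clarity: it isolates the generic lattice-theoretic content (irreducible $=$ complement is a filter) from the specific combinatorics of intervals (filters $=$ subinterval-sets of a possibly infinite interval), and it makes transparent why the indexing set is $\Inf_O$ rather than $\I_O$. The paper's argument is more elementary and self-contained, requiring no appeal to the Alexandrov completion, and its case analysis directly exhibits the witnessing decompositions, which is useful for intuition about the lattice $\E_4$ pictured in the paper. All the steps you flag as potential obstacles (convexity of $\bigcup F$, recovery of $F$ as $F_{\bigcup F}$, the degenerate cases $I=\emptyset$ and $I=O$) do go through as you describe, using local finiteness and the explicit meet formula of Proposition~\ref{prop:lfin}.
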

\begin{proof}
Suppose that $A\neq 1$ and assume that there
is $[\ell\..r]\in A$ with $\ell\sqless r$. Since
$\{\?[\ell\..r]\?\}=\sing{[\ell]} \wedge \sing{[r]}$, we have,
letting $A'=A\setminus\{\?[\ell\..r]\?\}$, 
\[
A= A'\vee \{\?[\ell\..r]\?\} = 
A'\vee (\sing{[\ell]} \wedge \sing{[r]})=
(A'\vee\sing{[\ell]}) \wedge (A'\vee\sing{[r]}).
\]
Now we notice that $A'\vee\sing{[\ell]}$ cannot contain $[\ell\..r]$,
because it contains $[\ell]$, and the same is
true of $A'\vee\sing{[r]}$, so $A$ is not \ANDirr. 
Then necessarily $|I|=1$ for all $I\in A$. Now, suppose that
$(\bigcup A)^c$ is not an interval; then we have \[A= (A\cup \sing{\singint{x}})
\wedge (A \cup \sing{\singint y}),\] where $x$ and $y$ are taken so that they do not
belong to the same convex component of $(\bigcup A)^c$, and once again $A$ is not \ANDirr. 

Conversely, suppose by contradiction that $I\in \Inf_O$ and $A=\cgen I=A_1
\wedge A_2$. Then, for every $x \not\in I$ we must have $\singint x \in A_1 \cap
A_2$. Thus, the other intervals in $A_1$ and $A_2$ must be subintervals of $I$
(because $A_1$ and $A_2$ are antichains), and there must be at least one such interval
both in $A_1$ and in $A_2$. The meet of those two intervals 
is entirely contained in $I$, which implies that such interval, or a smaller one,
is in $A$, contradicting its definition.
\end{proof}

Figure~\ref{fig:e4} shows the lattice $\E_4$, with the
\ANDirr elements highlighted.
Note that in the infinite case the antichains $\cgen I$ with $I$ infinite are
exactly the \ANDirr elements that are \emph{not} completely \ANDirr.
\begin{figure}
	\centering
	\includegraphics[scale=.50]{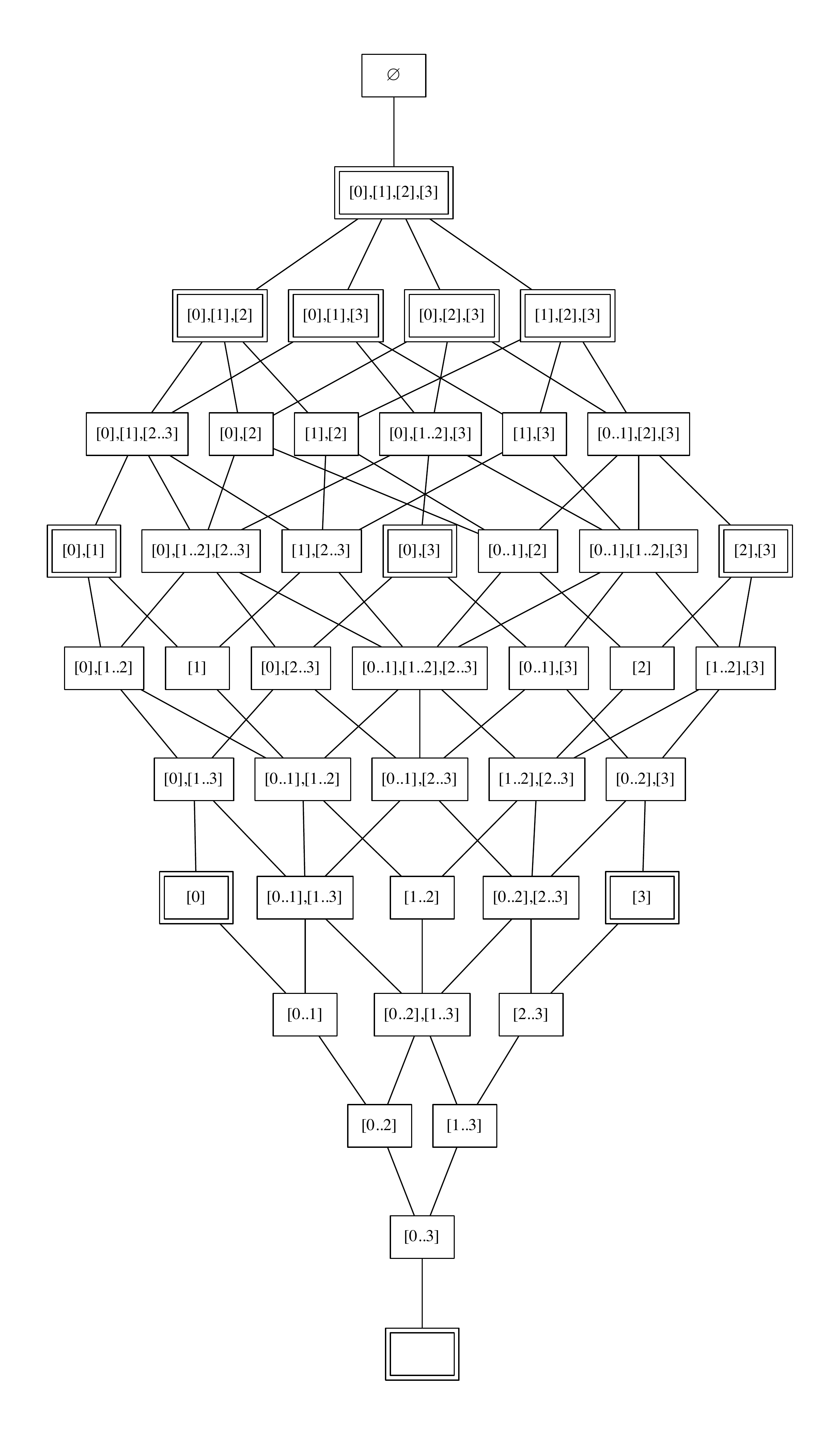}
	\caption{\label{fig:e4}The lattice $\E_4$. 
	The $\AND$-irreducible elements have a double border. Each
	horizontal layer corresponds to a level set (the set of elements of
	a given rank; see Section~\ref{sec:rank}).}
\end{figure}

We remark that $(\Inf_O,\supseteq)$ is a complete lattice
where the meet of a family of intervals is the smallest interval containing all
members of the family, and the join is just the intersection. Moreover, 
for any two intervals $I,J\in\Inf_O$ we have $\cgen I\leq\cgen J$ if
and only if $I\supseteq J$. In other words,
\begin{proposition}
\label{prop:isoandirr}
The map from $(\Inf_O,\supseteq)$ to the poset of \ANDirr
elements of $\E_O$ defined by the tilda operator is an isomorphism.
\end{proposition} 

The fact that \ANDirr elements correspond naturally to intervals of
$(\Inf_O,\supseteq)$ leads us to the following definition:
\begin{definition}
\label{def:crit}
An interval $I\in(\Inf_O,\supseteq)$ is \emph{critical} for an antichain
$A\in\E_O$ iff it is minimal among the
intervals of $(\Inf_O,\supseteq)$ that do not contain any interval of $A$.
The set of critical intervals for $A$ is denoted by
$\crit A$.
\end{definition}
Said otherwise, an interval is critical iff it is $\supseteq$-minimal in
$\Inf_O\setminus\downset A$, where the $\downset$ operator is computed in
$(\Inf_O,\supseteq)$ (as an antichain of $(\I_O,\supseteq)$ is also an antichain
of $(\Inf_O,\supseteq)$). 

The condition that a critical interval $I$ does not contain any interval of $A$
is trivially equivalent to $A \leq\cgen I$. By
Proposition~\ref{prop:isoandirr},
\begin{lemma}
Let $I \in \Inf_O$ and $A \in \E_O$. Then, $I$ is critical for $A$ iff
$A \leq \cgen I$ and $\cgen I$ is minimal with this property.
\end{lemma}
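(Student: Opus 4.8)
The plan is to prove the lemma by pure translation: the right-hand side is exactly what Definition~\ref{def:crit} becomes once one pushes it through the isomorphism of Proposition~\ref{prop:isoandirr}. No new combinatorial construction is required, so all the work lies in matching up two descriptions of the same family.

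First I would make explicit the equivalence already announced in the paragraph just above the statement: for $I\in\Inf_O$ and $A\in\E_O$, the interval $I$ contains no interval of $A$ if and only if $A\le\cgen I$. Unwinding the order of $\E_O$, the inequality $A\le\cgen I$ says that every $K\in A$ admits a witness $\singint x\subseteq K$ with $\singint x\in\cgen I$, i.e.\ with $x\notin I$; and this is precisely the statement that $K\not\subseteq I$ for every $K\in A$. Hence the family over which Definition~\ref{def:crit} takes a minimum, namely the intervals of $\Inf_O$ containing no interval of $A$, is literally the set $\{\,J\in\Inf_O : A\le\cgen J\,\}$.

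Next I would invoke Proposition~\ref{prop:isoandirr}: the tilde map is an order isomorphism from $(\Inf_O,\supseteq)$ onto the poset of \ANDirr elements of $\E_O$. An order isomorphism carries the minimal elements of any subset onto the minimal elements of its image, and reflects them, so $I$ is $\supseteq$-minimal in $\{\,J : A\le\cgen J\,\}$ if and only if $\cgen I$ is minimal in $\{\,\cgen J : A\le\cgen J\,\}$. By the previous paragraph together with Definition~\ref{def:crit}, the first condition is precisely ``$I$ is critical for $A$'', while the second is precisely ``$A\le\cgen I$ and $\cgen I$ is minimal with this property''; chaining the two equivalences then yields the lemma.

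The only delicate point, and the nearest thing to an obstacle, is the reading of ``minimal with this property'': it must mean minimal among the \ANDirr elements $\cgen J$ satisfying $A\le\cgen J$ (equivalently, among the intervals $J$ via the isomorphism), and \emph{not} among arbitrary $X\in\E_O$ with $A\le X$, for which the minimum would trivially be $A$ itself. Once this is pinned down, Proposition~\ref{prop:isoandirr} transports the $\supseteq$-minimality of Definition~\ref{def:crit} verbatim, and the degenerate cases need no separate argument beyond checking that the two empty or singleton families coincide: for $A=1=\sing\emptyset$ both families are empty (no $\cgen J$ lies above $1$, and every interval contains the empty interval of $A$, so none is critical), while for $A=0=\emptyset$ both conditions single out the bottom interval $O$, so the equivalence holds there as well.
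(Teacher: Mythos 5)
Your proof is correct and follows exactly the route the paper intends: the paper itself derives the lemma in one line from the observed equivalence ($I$ contains no interval of $A$ iff $A\leq\cgen I$) together with the order isomorphism of Proposition~\ref{prop:isoandirr}, and your write-up simply makes that transport of $\supseteq$-minimality explicit, including the correct reading of ``minimal with this property'' and the degenerate cases $A=0,1$.
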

Thus, the critical intervals of $A$ are exactly those associated
with the minimal \ANDirr elements that dominate $A$. We now give an
explicit characterization, which also shows that $\crit A$ is finite if $A$ is finite:
\begin{theorem}
\label{th:crit}
An interval $I\in\Inf_O$ is critical for an antichain
$A$ iff one of the following happens:
\begin{itemize}
  \item $I=[\ell+1\..r'-1]\neq\emptyset$ and there are two intervals
  $[\ell\..r],[\ell'\..r']\in A$ such that the latter is the successor of the former in $A$;
  \item $I=\linf{r-1}\neq\emptyset$ and $[\ell\..r]$ is the first element of $A$
  for some $\ell$;
  \item $I=\rinf{\ell+1}\neq\emptyset$ and $[\ell\..r]$ is the last element of $A$
  for some $r$;
  \item $I=O$ and $A=0$;
  \item $I=\emptyset$ and $A=1^-$.
\end{itemize}
\end{theorem}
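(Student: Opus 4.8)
The plan is to reduce the statement, via the preceding Lemma and Proposition~\ref{prop:isoandirr}, to a combinatorial description of certain maximal intervals. Since the order on $\Inf_O$ is reverse inclusion, $I$ is critical for $A$ precisely when $I$ contains no interval of $A$ and is $\subseteq$-maximal with this property; equivalently, $I\in\Inf_O\setminus\downset A$ and no interval strictly containing $I$ lies in $\Inf_O\setminus\downset A$. I would prove both implications, organizing the argument around a dichotomy for the two ends of $I$ after disposing of the two degenerate antichains.

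First I would settle the extreme cases. If $I=\emptyset$, maximality says that every nonempty interval contains some interval of $A$; applied to the singletons $\singint x$ this forces $\singint x\in A$ for every $x\in O$ (an interval of $A$ contained in $\singint x$ must be $\singint x$ itself, since $\emptyset\notin A$), that is, $A=1^-$, the last case. Dually, $I=O$ contains every interval, so it avoids containing an interval of $A$ only when $A=0$, the fourth case; and conversely, for $A=0$ we have $\Inf_O\setminus\downset A=\Inf_O$, whose only $\subseteq$-maximal element is $O$. There remains the main case: $A$ is a nonempty antichain of nonempty finite intervals with $A\neq 1^-$, and $I$ is neither $\emptyset$ nor $O$, hence has at least one finite endpoint.

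The heart of the proof is that maximality \emph{blocks} each end of $I$, either against the boundary of $O$ or against an interval of $A$. Suppose $I$ has a least element that is not $\min O$, so $I=[\ell+1\..-]$ with $\ell\in O$ and $\ell\notin I$. Then the one-step leftward extension $[\ell\..-]$ must contain some interval of $A$, and since $I$ does not, that interval is forced to have left extreme exactly $\ell$: we get $[\ell\..\rho]\in A$ with $\rho$ at most the right end of $I$. The symmetric argument on the right yields $[\lambda'\..r']\in A$ with $r'$ one step past the right end of $I$. Using the natural (left-extreme) order on $A$---whose intervals have strictly increasing left \emph{and} right extremes, as $A$ is an antichain---together with the fact that $I$ contains no interval of $A$, I would show that no interval of $A$ lies strictly between these two blocking intervals; hence they are consecutive, giving $I=[\ell+1\..r'-1]$ as in the first case. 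When instead an end of $I$ is boundary-blocked (no predecessor or successor available in $O$, so $I$ runs to $\leftarrow$ or $\rightarrow$), the single remaining blocking interval becomes respectively the first or the last element of $A$, producing the forms $\linf{r-1}$ and $\rinf{\ell+1}$. The converse direction is then a routine verification that each displayed interval contains no interval of $A$ and cannot be enlarged, using the same monotonicity of extremes.

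The main obstacle I anticipate is boundary bookkeeping rather than any deep idea: one must treat uniformly the ends of $I$ that have no predecessor or successor in $O$, identify $[\min O\..b]$ with $\linf b$ and $[a\..\max O]$ with $\rinf a$, and respect the convention that $\rinf{\ell+1}$ and $\linf{r-1}$ denote $\emptyset$ when the relevant successor or predecessor is missing---which is exactly what the nonemptiness side-conditions $[\ell+1\..r'-1]\neq\emptyset$, $\linf{r-1}\neq\emptyset$, $\rinf{\ell+1}\neq\emptyset$ encode (for instance, two adjacent singletons of $A$ leave no room for a critical interval between them). The second delicate point is upgrading ``some interval of $A$ blocks this end'' to ``the \emph{immediate} predecessor, or the \emph{first} element, of $A$ blocks this end,'' which is precisely where the antichain order and the no-containment property of $I$ must be used together.
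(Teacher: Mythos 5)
Your proposal is correct and follows essentially the same route as the paper's proof: a direct case analysis on the shape of $I$, using $\supseteq$-minimality (your $\subseteq$-maximality) to pin the endpoints of $I$ one step past the extremes of two consecutive intervals of $A$, or past the first/last interval of $A$ when an end of $I$ is unbounded, with the converse left as a routine check. The only cosmetic difference is that you locate the blocking intervals via one-step extensions of $I$ that must acquire an interval of $A$, whereas the paper picks the interval of $A$ with largest left extreme below $\min I$ and invokes minimality afterwards; you are also somewhat more explicit about the degenerate cases $I=\emptyset$ and $I=O$.
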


\begin{proof}
Assume first that $I\in\Inf_O$ is critical for $A$.
\begin{itemize}
\item If $I=\linf x$, then necessarily $x\sqless r$, where $r$ is the right
extreme of the first interval of $A$, otherwise the first interval would be included in
$I$. Thus, by $\supseteq$-minimality $x=r-1$; the case
$I=\rinf x$ is analogous.
\item Suppose that $I=[x\..y]$; since no interval of $A$ can be included in $I$,
every interval starts before $x$ or ends after $y$, and we can assume that there
is at least one interval starting before $x$ and at least
one interval ending after $y$ (otherwise we fall in the previous case). Let $[\ell\..r]$ be an
interval of $A$ starting before $x$, but with $\ell\sqless x$ as large as possible; since
it cannot be the last one, its successor $[\ell'\..r']$ will end after $y$
(i.e., $y\sqless r'$). But then by $\supseteq$-minimality $\ell=x-1$ and $y=r'-1$.
\end{itemize}
The other implication follows by a trivial case-by-case analysis.
\end{proof}

Our goal now is to show that the mapping
$A\mapsto \crit A$ is actually an isomorphism
\[
\A(\I_O,\supseteq)\cong\A(\Inf_O,\subseteq)^\op.
\]
More explicitly,
\[
\E_O=\A(\I_O,\supseteq)\cong \A\bigl((\Inf_O,\supseteq)^\op\bigr)^\op=
\A(\Inf_O,\subseteq)^\op.
\]
In particular, this isomorphism will imply that elements of $\E_O$ have
unique, irredundant $\land$-representations by antichains of \ANDirr elements.
Moreover, Theorem~\ref{th:antisup} and~\ref{th:antiinf} will provide the rules to perform
computations in $\A(\Inf_O,\subseteq)^\op$, and thus on
$\land$-representations.

We also remark that the antichain completion $\A(\Inf_O,\subseteq)^\op$ is \emph{not} an Alexandrov completion, because $(\Inf_O,\subseteq)$ does 
not satisfy the ACC in general. This consideration also explains why the
two dualizations do not cancel out. 
The reader 
should contrast this theorem with the fact that 
$\A(P)$ is isomorphic to the antichain completion of its 
(completely) \ORirr
elements.

The order in $\A(\Inf_O,\subseteq)^\op$ can be written
in elementary form by unwinding~(\ref{eq:alexorder}): given antichains $S,T$ 
of intervals in $\Inf_O$, we have the following chain of equivalences:
\[
S\leq T \text{ in $\A(\Inf_O,\subseteq)^\op$}\iff
T\leq S \text{ in $\A(\Inf_O,\subseteq)$}\iff
\forall I \in T \,\,\exists J \in S \quad I\subseteq J.
\]

As a first step towards proving the isomorphism, we now characterize the meets of incomparable \ANDirr elements in $\E_O$
in terms of the associated antichain in $(\Inf_O,\supseteq)$:
\begin{proposition}
\label{prop:infhat}
Consider an antichain $S$ of $(\Inf_O,\supseteq)$
and the meet (in $\E_O$) $M=\bigwedge\{\?\cgen I\mid I\in S\?\}$. Then, $M$ contains the
maximal intervals of $(\I_O,\supseteq)$ that are not contained in any interval
of $S$.
\end{proposition}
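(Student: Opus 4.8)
The plan is to carry out the computation inside the Alexandrov completion $\L(\I_O,\supseteq)$, where the meet $M$ becomes an intersection of explicitly describable lower sets. By Proposition~\ref{prop:lfin}(\ref{lfin:acc}) the poset $(\I_O,\supseteq)$ satisfies the ACC, so Theorem~\ref{thm:alexandroff} makes $\downset(-)\colon\E_O\to\L(\I_O,\supseteq)$ an isomorphism of complete lattices; being a complete-lattice isomorphism, it sends the (possibly infinite) meet $M$ to the intersection of the images of the factors, and in $\L(\I_O,\supseteq)$ meets are intersections. Hence
\[
\downset M=\bigcap_{I\in S}\downset\cgen I .
\]

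Next I would evaluate one factor. For $I\in\Inf_O$, a finite interval $J$ lies in $\downset\cgen I$ exactly when $J\supseteq\singint x$ for some $x\notin I$, that is, when $J$ meets the complement of $I$, that is, when $J\not\subseteq I$. Therefore $\downset\cgen I=\{\,J\in\I_O\mid J\not\subseteq I\,\}$, and intersecting over $S$ gives
\[
\downset M=\{\,J\in\I_O\mid J\not\subseteq I\text{ for all }I\in S\,\},
\]
the set of all finite intervals contained in no member of $S$. This is indeed a lower set of $(\I_O,\supseteq)$, since passing from $J$ to a finite superset $J'\supseteq J$---the downward move in $(\I_O,\supseteq)$---cannot make $J'$ fit inside some $I\in S$ when $J$ did not.

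Finally I would recover $M$ from $\downset M$. As $\downset(-)$ is an isomorphism, $M$ is the unique antichain of $(\I_O,\supseteq)$ whose lower set is the one just computed; equivalently, $M$ is its set of $\supseteq$-maximal elements, and by the ACC every interval of the lower set lies below such a maximal element. These $\supseteq$-maximal elements are precisely the maximal intervals of $(\I_O,\supseteq)$ that are contained in no member of $S$, which is the asserted description of $M$ (in fact with equality, not merely containment).

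The delicate point is the first one: replacing the meet $M$ by the intersection $\bigcap_{I\in S}\downset\cgen I$. For a \emph{finite} family this is immediate from Theorem~\ref{th:antiinf}, but $S$ may be infinite, so the argument genuinely needs that $\downset(-)$ is an isomorphism of \emph{complete} lattices (hence preserves infinite meets), together with the fact---again a consequence of the ACC via Theorem~\ref{thm:alexandroff}---that the resulting lower set is generated by its $\supseteq$-maximal elements. Everything else is a routine unwinding of definitions.
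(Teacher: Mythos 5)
Your proposal is correct and follows essentially the same route as the paper's proof: pass to lower sets so that the meet becomes $\bigcap_{I\in S}\downset\cgen I$, identify $\downset\cgen I$ with the intervals not contained in $I$, and use the ACC of $(\I_O,\supseteq)$ to recover $M$ as the $\supseteq$-maximal elements of the resulting lower set. The paper states the key identity $\downset M=\bigcap_{I\in S}\downset\cgen I$ without comment, whereas you carefully justify it for infinite $S$ via the complete-lattice isomorphism of Theorem~\ref{thm:alexandroff}; this is a welcome precision rather than a divergence.
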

\begin{proof}
Note that $\downset M=\bigcap_{I\in S}\downset\cgen I$. Since $\downset\cgen I$
contains exactly all intervals that are not contained in $I$, $\bigcap_{I\in
S}\downset\cgen I$ contains exactly all intervals that are not contained in any
interval of $S$, and since $(\I_O,\supseteq)$ satisfies the ACC $M$ contains
exactly the maximal elements among such intervals.
\end{proof}
Said otherwise, an interval belongs to $M$ iff it is
$\supseteq$-maximal in $\I_O\setminus\upset S$. An immediate corollary
shows a special property of singleton intervals:
\begin{corollary}
\label{cor:sing}
Consider a nonempty antichain $S$ of $(\Inf_O,\supseteq)$
and the meet $M=\bigwedge\{\?\cgen I\mid I\in S\?\}$. Then, $[x]\in
M$ iff $x\not\in\bigcup S$.
\end{corollary}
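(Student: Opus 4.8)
The plan is to read off both implications directly from Proposition~\ref{prop:infhat}, via its reformulation: an interval lies in $M$ exactly when it is $\supseteq$-maximal in $\I_O\setminus\upset S$, that is, when it is contained in no interval of $S$ while each of its proper subintervals is contained in some interval of $S$. Throughout I will use the trivial fact that, for a singleton, $\singint x\subseteq I$ holds if and only if $x\in I$.

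The forward implication is immediate. If $\singint x\in M$, then $\singint x\in\downset M=\I_O\setminus\upset S$, so $\singint x$ is contained in no interval of $S$; equivalently $x\notin I$ for every $I\in S$, i.e.\ $x\notin\bigcup S$. No use of maximality or of the nonemptiness of $S$ is needed here.

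For the converse, assume $x\notin\bigcup S$. Then $x\notin I$ for all $I\in S$, so $\singint x$ is contained in no interval of $S$ and hence $\singint x\in\I_O\setminus\upset S$. To conclude $\singint x\in M$ I still have to verify $\supseteq$-maximality, i.e.\ that every proper subinterval of $\singint x$ lies in $\upset S$. But the only proper subinterval of a singleton is $\emptyset$, and since $S$ is nonempty, $\emptyset$ is contained in every interval of $S$, so $\emptyset\in\upset S$. Thus $\singint x$ is $\supseteq$-maximal in $\I_O\setminus\upset S$, and therefore $\singint x\in M$.

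The only delicate point—and the place where the hypothesis $S\neq\emptyset$ is actually used—is this maximality step in the converse. A singleton is already minimal among the nonempty finite intervals, so its sole candidate proper subinterval (equivalently, its only potential $\supseteq$-superior in $\I_O$) is the empty interval, and the nonemptiness of $S$ is exactly what guarantees $\emptyset\in\upset S$, removing the one possible obstruction to maximality. Everything else reduces to the elementary translation between $\subseteq$ and membership for singletons, so no further calculation is required.
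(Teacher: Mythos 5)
Your proof is correct and follows exactly the route the paper intends: the paper states this as an immediate consequence of Proposition~\ref{prop:infhat} without further argument, and you have simply filled in the details. Your observation that the nonemptiness of $S$ is needed precisely to put $\emptyset$ (the unique proper subinterval of a singleton) into $\upset S$, thereby securing $\supseteq$-maximality, is the right — and only — delicate point.
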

% \begin{proof}
% We note that $[x]\in M$ iff $[x]\in \downset M$ (all singletons are
% maximal, as $\emptyset\not\in M$) iff $[x]\in \downset \cgen I$
% for all $I\in S$ (since meets are obtained by taking the maximal elements of the intersection of
% the downsets) iff $x\not\in I$ for all $I\in S$ iff $x\not\in\bigcup S$. 
% \end{proof}
The next theorem describes
explicitly the meets of incomparable \ANDirr elements, using the following notation:
\[
\ep{\ell,r}=\cgen{[\ell+1\..\rightarrow)}\wedge\cgen{(\leftarrow\..r-1]}=\begin{cases}
	\{\?[\ell\.. r]\?\} & \text{if $\ell\sqless r$,}\\
	\{\?\singint x \mid r \sqleq x \sqleq \ell\?\} & \text{otherwise.}
\end{cases}
\]
\begin{theorem}
\label{th:infhat}
Consider an antichain $S$ of $(\Inf_O,\supseteq)$
and the meet $M=\bigwedge\{\?\cgen I\mid I\in S\?\}$. Then, $M$ contains exactly
the following pairwise disjoint sets of intervals:
\begin{itemize}
  \item $\ep{\ell'-1,r+1}$, if $S$ contains two consecutive intervals
  $[-\..r]$, $[\ell'\..-]$.
  \item $\cgen{[\ell\..\rightarrow)}$ if $S$ has a first interval, and it is of
  the form $[\ell\..-]$.
  \item $\cgen{(\leftarrow\..r]}$ if $S$ has a last interval, and it is of the
  form $[-\..r]$.
  \item $\cgen\emptyset$ if $\emptyset\in S$.
  \item $\sing\emptyset$ if $S=\emptyset$.
\end{itemize}
\end{theorem}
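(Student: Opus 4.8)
The plan is to start from Proposition~\ref{prop:infhat}, which already identifies $M$ with the set of $\subseteq$-minimal finite intervals that are contained in no member of $S$. The first thing I would record is a convenient reformulation of ``escaping'': since intervals are convex, a nonempty finite interval $[a\..b]$ with $a\sqleq b$ lies inside some $I\in S$ if and only if a single $I\in S$ contains both $a$ and $b$. Hence $M$ is exactly the family of $\subseteq$-minimal intervals $[a\..b]$ for which no member of $S$ contains both endpoints. I would then dispose of the two degenerate bullets immediately: if $S$ is the empty antichain the meet is empty, so $M$ is the top $\sing\emptyset$; and since $\emptyset$ is the top of $(\Inf_O,\supseteq)$, the assumption $\emptyset\in S$ forces $S=\sing\emptyset$, whence every singleton escapes and $M=\cgen\emptyset=1^-$.

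In the remaining case $S$ is a nonempty antichain of nonempty intervals. As observed in the discussion of the natural ordering, its members are pairwise non-nested, hence are linearly ordered by left extreme (equivalently right extreme), and this order is locally finite because $O$ is; this is what makes ``first'', ``last'' and ``consecutive'' meaningful. I would then split $M$ into its singletons and its longer intervals. By Corollary~\ref{cor:sing}, $\singint x\in M$ iff $x\notin\bigcup S$, so I would partition the uncovered points of $O$ into their maximal runs and match each run to a bullet: points to the left of a left-bounded first interval $[\ell\..-]$ give $\cgen{[\ell\..\rightarrow)}$; points to the right of a right-bounded last interval $[-\..r]$ give $\cgen{(\leftarrow\..r]}$; and the points strictly between two consecutive intervals $[-\..r]$ and $[\ell'\..-]$ give exactly the singleton intervals $\singint x$ with $r\sqless x\sqless\ell'$, i.e.\ the ``otherwise'' branch of $\ep{\ell'-1,r+1}$.

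The heart of the argument is the longer intervals. Given a $\subseteq$-minimal escaping $[a\..b]$ with $a\sqless b$, minimality forces both proper subintervals $[a\..b-1]$ and $[a+1\..b]$ to be non-escaping, so each sits inside some member of $S$, say $[a\..b-1]\subseteq I$ and $[a+1\..b]\subseteq I'$. Because $[a\..b]$ itself escapes we have $I\neq I'$; since $b$ is the successor of $b-1$ and $b\notin I$, the right extreme of $I$ is exactly $b-1$, and dually the left extreme of $I'$ is exactly $a+1$. I would then show $I$ and $I'$ are consecutive in $S$: any member lying strictly between them would, by these extremal constraints, contain both $a$ and $b$, contradicting escape. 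Setting $r=b-1$ and $\ell'=a+1$ exhibits $[a\..b]=[\ell'-1\..r+1]$ as the straddling (``$\ell'-1\sqless r+1$'') branch of $\ep{\ell'-1,r+1}$ for the pair $[-\..r],[\ell'\..-]$, and conversely a direct check confirms this straddle is $\subseteq$-minimal and escaping precisely when the gap is too small to hold an uncovered point.

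I expect the main obstacle to be the simultaneous bookkeeping that welds the two branches of $\ep{\ell'-1,r+1}$ to a single consecutive pair while keeping the description both exhaustive and disjoint: for each pair one must verify that either the straddle $[\ell'-1\..r+1]$ is minimal escaping (small gap) or the interior singletons are (wide gap), but never both, and that a minimal escaping interval can never straddle three or more members of $S$ (which is exactly what the minimality argument above rules out). Carrying this through uniformly when some members of $S$ are one- or two-sided infinite, so that a ``first'' or ``last'' interval may be absent and the corresponding bullet silently drops out, is the part most likely to demand care. Disjointness itself is then clear, since each contributing set is anchored at a distinct gap or inter-interval boundary, so the singleton runs occupy disjoint stretches of uncovered points and the straddles sit at distinct boundaries.
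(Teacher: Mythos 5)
Your proposal is correct and follows essentially the same route as the paper's proof: both reduce everything to Proposition~\ref{prop:infhat}, settle the singleton intervals via Corollary~\ref{cor:sing}, and tie each non-singleton member of $M$ to a consecutive pair of intervals of $S$. The only local difference is in that last step, where you argue directly from $\subseteq$-minimality (the two maximal proper subintervals $[a\..b-1]$ and $[a+1\..b]$ must each be covered by distinct members of $S$, which pins down the extremes of the consecutive pair), whereas the paper argues by contradiction, taking the last interval of $S$ whose left extreme is at most $a$ and showing that its successor yields an element of $M^*$ properly contained in $[a\..b]$; the two arguments are interchangeable.
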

\begin{proof}
 We will denote with
$M^*$ the set of intervals specified by the statement of the theorem, and with
$M$ the meet $\bigwedge\{\?\cgen I\mid I\in S\?\}$.
Proposition~\ref{prop:infhat} and a simple case-by-case analysis shows that $M^*
\subseteq M$. Moreover, Corollary~\ref{cor:sing} shows that the singleton
intervals of $M$ and $M^*$ are the same.

We are left to prove that no other non-singleton interval can belong to $M$
(hence $M^*=M$).
Assume by contradiction that there is an interval $I=[a\..b]\in M\setminus M^*$,
$a\sqless b$, which must be incomparable with all intervals in $M^*$
(because $M$ is an antichain, and $M^*\subseteq M$). Moreover, it cannot
be contained in any interval of $S$.

Since $I$ is not a singleton, by Corollary~\ref{cor:sing} it must be contained in
$\bigcup S$ (otherwise there would be an $x\in I$ with $\singint x\in M$, which
is impossible).
Consider the last interval $J=[\ell\..r]$ of $S$ with $\ell\sqleq a$. $J$
must necessarily overlap with $I$, but then $r\sqless b$, as $I$ cannot be
contained in any interval in $S$. Since $I\subseteq \bigcup S$, the interval $J$
must have a successor in $S$, say $[\ell'\..r']$, and of course $a
\sqless\ell'$, so $I$ contains $[\ell'-1\..r+1]\in M^*$, a contradiction.

A straightforward check shows that all the unions appearing in the description
of $M$ in the statement of the theorem are disjoint.
\end{proof}

We now fulfill our promise:
\begin{theorem}
\label{th:iso}
Given $A\in \E_O$ and $S\in \mathscr
A(\Inf_O,\subseteq)^\op$, the maps
\begin{align*}
A &\stackrel f\mapsto \crit A\\
S &\stackrel g\mapsto \bigwedge_{I\in S} \cgen I\\
\end{align*}
define an isomorphism between $\E_O$ and
$\A(\Inf_O,\subseteq)^\op$.
\end{theorem}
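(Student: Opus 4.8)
The plan is to exhibit $f$ and $g$ as mutually inverse, order-preserving maps; since a pair of mutually inverse monotone maps is automatically an order isomorphism, this suffices. Throughout I would work with the ``avoiding set'' $N(A)=\{\,I\in\Inf_O\mid I\text{ contains no interval of }A\,\}$. By the Lemma preceding Theorem~\ref{th:crit}, the condition $I\in N(A)$ is equivalent to $A\le\cgen I$, and Definition~\ref{def:crit} says precisely that $\crit A$ is the set of $\subseteq$-maximal (equivalently $\supseteq$-minimal) elements of $N(A)$. First I would record well-definedness: distinct $\subseteq$-maximal elements are incomparable, so $\crit A$ is a genuine antichain of $\Inf_O$ and $f(A)\in\A(\Inf_O,\subseteq)^\op$; and since $\E_O$ is a complete lattice, the meet defining $g(S)$ exists, the empty meet giving the top $\sing\emptyset$.

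For monotonicity of $g$, suppose $S\le T$ in $\A(\Inf_O,\subseteq)^\op$, which by the unwinding of~(\ref{eq:alexorder}) means every $I\in T$ satisfies $I\subseteq J$ for some $J\in S$. By Proposition~\ref{prop:isoandirr} this gives $\cgen J\le\cgen I$, hence $g(S)=\bigwedge_{J\in S}\cgen J\le\cgen I$ for every $I\in T$, so $g(S)\le\bigwedge_{I\in T}\cgen I=g(T)$. For monotonicity of $f$, the key observation is that $A\le B$ forces $N(B)\subseteq N(A)$: if $I$ contains some $L\in A$ and $A\le B$, then $L$ contains some $L'\in B$, so $I$ contains $L'\in B$. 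Given $I\in\crit B\subseteq N(B)\subseteq N(A)$, I would then enlarge $I$ to a $\subseteq$-maximal avoider of $A$, i.e.\ to some $J\in\crit A$ with $I\subseteq J$; this shows $\crit B\le\crit A$ in $\A(\Inf_O,\subseteq)$, i.e.\ $f(A)\le f(B)$ in the opposite order.

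The two composites are then identities. To see $f\circ g=\mathrm{id}$, Proposition~\ref{prop:infhat} identifies the intervals of $M:=g(S)$ as the $\supseteq$-minimal finite intervals not contained in any member of $S$; from this I would prove that $N(M)$ is exactly the set of intervals contained in some member of $S$. The inclusion ``$\supseteq$'' is immediate, and for ``$\subseteq$'' an interval not contained in any member of $S$ contains (using the ACC of $(\I_O,\supseteq)$, or a finite witness when the interval is infinite) a $\supseteq$-minimal such interval, that is, an interval of $M$, so it is not in $N(M)$. Since $S$ is an antichain, the $\subseteq$-maximal elements of $N(M)$ are exactly $S$, whence $\crit M=S$. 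To see $g\circ f=\mathrm{id}$, I would use that $\downset(-)$ is an order embedding (indeed an isomorphism onto $\L(\I_O,\supseteq)$ by Theorem~\ref{thm:alexandroff}) that reflects meets: thus $\downset g(\crit A)=\bigcap_{J\in\crit A}\downset\cgen J$, the finite intervals not contained in any critical $J$. A finite interval containing some $L\in A$ cannot sit inside a critical (hence avoiding) interval, while a finite interval containing no interval of $A$ is itself an avoider and so embeds in a critical interval. Hence $\downset g(\crit A)=\downset A$ and, by injectivity of $\downset(-)$, $g(\crit A)=A$.

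The main obstacle is the enlargement step used twice above: because $(\Inf_O,\subseteq)$ does not satisfy the ACC, it is not automatic that an avoiding interval lies below a $\subseteq$-maximal one. I would settle this with a directed-union argument: given a chain of avoiders of $A$, its union is again an avoider, since any interval of $A$ it contained would, being finite, already be contained in one member of the chain; Zorn's lemma then yields a maximal avoider above any given one. Alternatively, one can read the maximal avoiders off the explicit list in Theorem~\ref{th:crit} and verify both composites by matching Theorem~\ref{th:crit} against Theorem~\ref{th:infhat} case by case. Once $f$ and $g$ are shown to be mutually inverse monotone maps they constitute the desired isomorphism; in particular every $A\in\E_O$ acquires the unique irredundant $\land$-representation $A=\bigwedge_{I\in\crit A}\cgen I$ by \ANDirr elements.
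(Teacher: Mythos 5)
Your proposal is correct and takes essentially the same route as the paper: monotonicity of both maps, then $f\comp g=g\comp f=\mathbf 1$ via the identifications $\crit A=\min_\supseteq(\Inf_O\setminus\downset A)$ and $g(S)=\max_\supseteq(\I_O\setminus\upset S)$, with the crux in both arguments being that every interval avoiding $A$ lies inside a maximal such interval. The one substantive divergence is precisely at that step: you obtain the maximal avoider by Zorn's lemma applied to chains (whose unions are avoiders because the intervals of $A$ are finite), whereas the paper takes the union of \emph{all} avoiders containing a given $I$ -- and your version is actually the safer one, since that union need not itself avoid $A$ (for $A=\{\?[0\..5]\?\}$ the avoiders $[0\..4]$ and $[1\..5]$ both contain $[2\..3]$, yet their union contains $[0\..5]$); aside from an inconsequential terminological slip (the intervals of $g(S)$ are the $\subseteq$-minimal, i.e.\ $\supseteq$-\emph{maximal}, finite intervals not contained in any member of $S$), everything checks out.
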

\begin{proof}
We start by proving monotonicity. Note that if
$S,T\in\A( \Inf_O,\subseteq)^\op$, if we have $S\leq T$ then for each $I\in
T$ there is a $J\in S$ such that $I\subseteq J$, which means $\cgen J\leq \cgen
I$. Thus, trivially $S\leq T$ implies $g(S)\leq g(T)$.

Suppose now we have $A,B\in \E_O$ with $A\leq B$, and let $I\in f(B)$.
Assume by contradiction that there is a $J\in A$ such that $J\subseteq I$. Since
$A\leq B$ there must be a $K\in B$ such that $K\subseteq J\subseteq I$,
contradicting the fact that, by definition, $I$ does not contain any interval of
$B$. Since $I$ does not contain any interval of $A$, it must be contained by definition
in a critical interval of $A$, that is, an interval in $f(A)$. We conclude that
$f(A)\leq f(B)$.

We now prove that $g\comp f=\mathbf 1$. By Definition~\ref{def:crit} and
Proposition~\ref{prop:infhat}, we have to show that
\[
g(f(A))=\max_\supseteq(\I_O\setminus \upset \min_\supseteq(\Inf_O\setminus
\downset A)) = A,
\]
where $\max_\supseteq$ ($\min_\supseteq$) computes the set of maximal (minimal)
elements by reverse inclusion.\footnote{Recall that when we write $\downset A$,
we are computing the lower set in $(\Inf_O,\supseteq)$ using the trivial
injection $(\I_O,\supseteq)\to(\Inf_O,\supseteq)$, which extends to antichains.} 

We first show that $\upset
\min_\supseteq(\Inf_O\setminus\downset A) = \Inf_O\setminus\downset A$. 
Let us start observing that given any family $X\subseteq \Inf_O$ whose
elements all contain a common interval, the union $\bigcup_{J
\in X} J$ is also an interval.
As a consequence, for every $I\in\Inf_O\setminus\downset A$  we have that
$\bigcup \{\?J 
\in \Inf_O\setminus \downset A \mid I\subseteq J\?\}$ is still an interval, so it is a
$\supseteq$-minimal element of $\Inf_O \setminus \downset A$.
In other words, $\Inf_O\setminus\downset A$ has the property that each element
dominates a $\supseteq$-minimal element, so taking 
the upper set of $\min_\supseteq(\Inf_O\setminus\downset A)$ gives back 
$\Inf_O\setminus\downset A$. Since $\I_O$ satisfies the ACC
\[
\max_\supseteq(\I_O\setminus \upset \min_\supseteq(\Inf_O\setminus
\downset A)) = \max_\supseteq(\I_O\setminus (\Inf_O\setminus
\downset A)) =  \max_\supseteq(
\I_O\cap\downset A ) = A.
\] 

Finally, we show that $f\comp g=\mathbf 1$. This is equivalent to
\[
f(g(S))=\min_\supseteq(\Inf_O\setminus \downset \max_\supseteq(\I_O\setminus
\upset S)) = S.
\]
We first show that $\downset
\max_\supseteq(\I_O\setminus\upset S) = \Inf_O\setminus\upset S$. Note that $\I_O\setminus\upset
S$ is a lower set of finite intervals and $\I_O$ satisfies the ACC, so taking in
$\I_O$ the lower set of $\max_\supseteq(\I_O\setminus\upset
S)$ would give back $\I_O\setminus\upset
S$. Building the lower set in $\Inf_O$, instead, we include also the
\emph{infinite} intervals of $\Inf_O$ that contain intervals of
$\I_O\setminus\upset S$, and this gives exactly $\Inf_O\setminus\upset S$. Thus,
\[
\min_\supseteq(\Inf_O\setminus \downset \max_\supseteq(\I_O\setminus
\upset S)) = \min_\supseteq(\Inf_O\setminus (\Inf_O\setminus
\upset S)) =  \min_\supseteq(
\upset S) = S.\qed
\]
\end{proof}

Thanks to the isomorphism we have just presented, we have an analogous of
Theorem~\ref{th:orrepr} for the case of $\land$-representations:
\begin{corollary}
Let $A \in \E_O$. Then,
\[
A = \bigwedge_{I\in \crit A} \cgen I
\]
and this is the only irredundant \ANDrepr of $A$ by 
\ANDirr elements of $\E_O$.
\end{corollary}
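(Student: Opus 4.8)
The plan is to obtain this statement by transporting Theorem~\ref{th:antisup}, applied to the poset $(\Inf_O,\subseteq)$ and then dualized, across the isomorphism of Theorem~\ref{th:iso}; there is no genuinely new combinatorics to do, only a formal transport of structure. First I would instantiate Theorem~\ref{th:antisup} with the base poset $(\Inf_O,\subseteq)$. This yields that the \ORirr elements of $\A(\Inf_O,\subseteq)$ are exactly the singleton antichains $\sing I$ with $I\in\Inf_O$, and that every $S\in\A(\Inf_O,\subseteq)$ admits
\[
S=\bigvee_{I\in S}\sing I
\]
as its unique irredundant \ORrepr by \ORirr elements.

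Next I would dualize. Passing to $\A(\Inf_O,\subseteq)^\op$ interchanges joins with meets and \ORirr with \ANDirr elements, so the displayed representation becomes the unique irredundant representation $S=\bigwedge_{I\in S}\sing I$ by \ANDirr elements, the meet being computed in the dual, with the singleton antichains $\sing I$ now playing the role of \ANDirr generators.

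The decisive step is to push this through the order isomorphism $g\colon\A(\Inf_O,\subseteq)^\op\to\E_O$, $S\mapsto\bigwedge_{I\in S}\cgen I$, of Theorem~\ref{th:iso}. Being an isomorphism between complete lattices, $g$ preserves arbitrary meets, sends \ANDirr elements to \ANDirr elements, and preserves irredundancy and uniqueness of representations by \ANDirr elements; moreover, by its very definition it sends each generating singleton to $g(\sing I)=\cgen I$, and this sets up the bijection between the \ANDirr elements $\cgen I$ of $\E_O$ and the intervals $I\in\Inf_O$ already recorded in Proposition~\ref{prop:isoandirr}. Since $g\comp f=\mathbf 1$, the antichain $S$ with $g(S)=A$ is precisely $S=f(A)=\crit A$, so
\[
A=g(\crit A)=g\Bigl(\bigwedge_{I\in\crit A}\sing I\Bigr)=\bigwedge_{I\in\crit A}\cgen I,
\]
and uniqueness together with irredundancy is inherited directly from the dualized Theorem~\ref{th:antisup}.

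The only point requiring care, and hence the place I would check most carefully, is the bookkeeping of the two dualizations: I must make sure that the \ORirr generators $\sing I$ of $\A(\Inf_O,\subseteq)$ correspond under $g$ exactly to the \ANDirr elements $\cgen I$ of $\E_O$, so that an irredundant representation of $A$ by \ANDirr elements of $\E_O$ pulls back, via $g^{-1}=f$, to an irredundant representation of $\crit A$ by the singleton antichains. Once that bijection of irreducibles is pinned down, the uniqueness claim transports verbatim and no further argument is needed.
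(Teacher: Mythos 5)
Your proposal is correct and matches the paper's own argument in substance: both hinge entirely on the isomorphism pair $f,g$ of Theorem~\ref{th:iso}, with existence coming from $g\comp f=\mathbf 1$ and uniqueness from the observation that an irredundant family of \ANDirr elements $\cgen I$ must be indexed by an antichain $S$, which $f\comp g=\mathbf 1$ then forces to equal $\crit A$. The paper states this antichain/irredundancy correspondence directly rather than invoking the dualized Theorem~\ref{th:antisup} and transporting it, but the mathematical content is the same.
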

\begin{proof}
By Theorem~\ref{th:iso}, $g(f(A))=A$, so $\sing{\cgen I \mid I \in \crit A}$ is
an irredundant \ANDrepr of $A$. Suppose that
\[
A = \bigwedge_{I\in S} \cgen I,
\]
and that the representation is irredundant. Then, $S$ must be an antichain, so
$A=g(S)$, but then $\crit A=f(A)=f(g(S))=S$.
\end{proof}

\section{Relative pseudo-complement and pseudo-difference}

Using the well-known characterization of the
relative pseudo-complement in terms of \ANDirr elements~\cite{SmiMEIL}, we
have that given $A,B \in \E_O$, if $B=\bigwedge_i C_i$ is the \ANDrepr of $B$,
then
\begin{equation}
\label{eq:psc}
 A\to B= \bigwedge \{\?C_i\mid A\not\leq C_i\?\}.
\end{equation}
The right-hand side, albeit explicit, does not suggest an easy way to compute
$A\to B$, but we will provide a closed form in the next section.  As an
aside, this characterization implies that the negation induced by the
relative pseudo-complement is trivial: for every $A\neq 0$ we have $\lnot A =
A\to 0 = 0$, whereas $\lnot 0=1$.

By duality, we have that
given $A,B \in \E_O$, if $A=\bigvee_i C_i$ is the \ORrepr of $A$,
then
\[
 A - B= \bigvee \{\?C_i\mid C_i\not\leq B\?\}.
\]
We can further simplify this expression by noting that if $B=\bigvee_j D_j$ is
the \ORrepr of $B$, then $C_i\not\leq B$ iff there is no
$D_j$ such that $C_i\leq D_j$:
\[
 A - B= \bigvee \{\?C_i\mid \nexists D_j \text{ such that } C_i\leq D_j\?\}.
\]
Finally, since we know that such representations are simply given by sets of
singleton antichains we can just write
\begin{equation}
\label{eq:diff}
 A - B= \{\? I\in A\mid \nexists J \in B \text{ such that } J \subseteq I\?\},
\end{equation}
which characterizes pseudo-difference in $\E_O$ in a completely elementary way.
Note that the Brouwerian complement of $A$, $1-A$, is always equal to $1$, except for $1-1=0$.

\smallskip
It is customary to define a few derived operators, such as the \emph{symmetric
pseudo-difference}:
\[
A\mathbin\Delta B = (A-B)\vee(B-A).
\]
It is well known that in any Brouwerian algebra
\[
(A \vee B) - (A\wedge B)= A \mathbin\Delta B. 
\]
In our lattice, it is easy to verify that moreover
\begin{equation}
(A \vee B) - (A \mathbin\Delta B )= A\cap B, 
\end{equation}
where $\cap$ is the standard set intersection. In other words, intersection of
antichains can be reconstructed using only lattice operations. In particular,
this means that the $\AND$-semilattice of antichains ordered by set inclusion
can be be reconstructed, too, since $A\subseteq B$ iff $A\cap B=A$.

Another straightforward but important property we will
use is that
\begin{equation}
\label{eq:int}
A -( B\vee C) = (A-B)\cap(A-C). 
\end{equation}

We remark that the observations above hold true in any Brouwerian algebra
that has unique $\vee$-representations by \ORirr elements.
The only difference, in the general case, is that $(A \vee B) - A \mathbin\Delta
B$ will be equal to the join of the \ORirr elements in the intersection of the
representations of $A$ and $B$.

\subsection{An elementary characterization of the relative pseudo-complement}

The simplicity of the elementary characterization of the
pseudo-difference~(\ref{eq:diff}) is due the fact that we are implicitly
representing the elements of $\E_O$ by \ORirr elements. Characterizing 
the relative pseudo-complement would be analogously easy if we were using a
representation based on \ANDirr elements.

It is possible, however, to obtain an elementary characterization of the
relative pseudo-complement $A\to B$ based on \ORirr elements (i.e., on an
antichain of $\I_O$). To do that, first convert $B$ to a representation by
\ANDirr elements using Theorem~\ref{th:crit}; then, select those
elements that are not greater than $A$, as in~(\ref{eq:psc}). Finally, 
convert the set of selected elements to a representation by \ORirr
elements using Theorem~\ref{th:infhat}. In this section we will make this
process explicit when $B$ is finite.

Note that for $A\in \E_O$,
$I\in\Inf_O$ we have $A\not\leq\cgen I$ iff $\sing I\leq A$. Then, 
if $A,B \in \E_O$ with $A\neq1$, $B\neq 0,1$ finite\footnote{It is
immediate to verify that $1\to A=A$, $A\to 1 = 0\to0 = 1$, and that $A\to 0=0$
for $A\neq 0$.}, say, $B=\{\?[\ell_i\..r_i]\mid 0\leq i<n\?\}$, $A\to B$ is
the meet of the following \ANDirr elements:
\begin{itemize}
  \item $\cgen{(\leftarrow\..r_0-1]}$, if $\{\?\linf{r_0-1}\?\}\leq
  A$;
  \item $\cgen{[\ell_{i-1}+1\..r_i-1]}$, for $0<i<n$, if $\sing{[\ell_{i-1}+1\..r_i-1]}\leq
  A$;
  \item $\cgen{[\ell_{n-1}+1\..\rightarrow)}$, if  $\{\?\rinf{\ell_{n-1}+1}\?\}\leq A$.
\end{itemize}
Given this description, by a tedious but straightforward application of
Theorem~\ref{th:infhat} we can provide a description in terms of \ORirr elements:
\begin{theorem}
\label{th:psc}
Let $A,B \in \E_O$ with $A\neq1$, $B\neq 0,1$ finite, and
$B=\{\?[\ell_i\..r_i]\mid 0\leq i<n\?\}$. Define $T$ as the set of all $i$ with $0<i<n$
such that 
$\{\?[\ell_{i-1}+1\..r_i-1]\?\}\leq A$, and, if $T\neq\emptyset$, $\himin=\min
T$, $\himax = \max T$, $T^+ = T \setminus \{\?\himin\?\}$.
For every $i \in T^+$, let $P(i)$ denote the predecessor of $i$ in $T$
(i.e., the largest element of $T$ smaller than $i$).
Then, if $T\neq\emptyset$,
\[
	A\to B = U^- \cup \bigcup_{i \in T^+} \?\ep{\ell_{i-1},r_{P(i)}} \cup U^+
\]
where
\[
	U^- = \begin{cases}
	\ep{\ell_{\himin-1},r_0} & \text{if $\{\?\linf{r_0-1}\?\}\leq A$,}\\
	\cgen{[\ell_{\himin-1}+1\..\rightarrow)} & \text{otherwise,}
	\end{cases}
\]
\[
	U^+ = \begin{cases}
	\ep{\ell_{n-1},r_{\himax}} & \text{if $\{\?\rinf{\ell_{n-1}+1}\?\}\leq A$,}\\
	\cgen{(\leftarrow\..r_{\himax}-1]}& \text{otherwise.}
	\end{cases}
\]
If $T=\emptyset$,
\[
	A\to B = \begin{cases}
	\ep{\ell_{n-1},r_0} & \text{if $\{\?\rinf{\ell_{n-1}+1}\?\},\{\?\linf{r_0-1}\?\}\leq A$,}\\
	\cgen{\rinf{\ell_{n-1}+1}} & \text{otherwise, if $\{\?\rinf{\ell_{n-1}+1}\?\}\leq A$,}\\
	\cgen{\linf{r_0-1}} & \text{otherwise, if $\{\?\linf{r_0-1}\?\}\leq A$,}\\
	1 & \text{otherwise.}
	\end{cases}
\]
\end{theorem}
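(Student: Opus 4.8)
The plan is to start from the characterization~(\ref{eq:psc}) of the relative pseudo-complement in terms of \ANDirr elements and feed it the explicit list of critical intervals furnished by Theorem~\ref{th:crit}. By Theorem~\ref{th:iso} every element has the unique irredundant representation $B=\bigwedge_{I\in\crit B}\cgen I$, so~(\ref{eq:psc}) reads $A\to B=\bigwedge\{\?\cgen I\mid I\in\crit B,\ A\not\leq\cgen I\?\}$. Using the observation that $A\not\leq\cgen I$ iff $\sing I\leq A$, I would let $S\subseteq\crit B$ be the subantichain of those critical intervals $I$ with $\sing I\leq A$; then $A\to B=\bigwedge_{I\in S}\cgen I=g(S)$, so the whole problem reduces to evaluating this single meet via Theorem~\ref{th:infhat}.

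First I would pin down $\crit B$ concretely. Since $B=\{\?[\ell_i\..r_i]\mid 0\leq i<n\?\}$ is finite with $B\neq 0,1$, Theorem~\ref{th:crit} yields exactly the intervals $\linf{r_0-1}$ (from the first element), $[\ell_{i-1}+1\..r_i-1]$ for $0<i<n$ (from consecutive elements), and $\rinf{\ell_{n-1}+1}$ (from the last element), each taken only when nonempty. Ordered by left extreme these have strictly increasing left \emph{and} right extremes, so they form an antichain of $(\Inf_O,\supseteq)$ listed in natural order, and $S$ inherits this order. The selection conditions match the theorem's data exactly: $\sing I\leq A$ for the middle interval of index $i$ is the defining condition of $T$, while $\{\?\linf{r_0-1}\?\}\leq A$ and $\{\?\rinf{\ell_{n-1}+1}\?\}\leq A$ govern whether the two extreme critical intervals lie in $S$. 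Degenerate empty critical intervals need no separate treatment: if such an $I$ equals $\emptyset$ then $\sing I=1\not\leq A$ since $A\neq1$, so it is automatically excluded.

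Next I would compute $g(S)=\bigwedge_{I\in S}\cgen I$ by feeding $S$, in natural order, into Theorem~\ref{th:infhat}, and then match the output against the claimed formula. Assume $T\neq\emptyset$. Each consecutive pair of middle intervals, indexed by $P(i)<i$ with $i\in T^+$, has the first ending at $r_{P(i)}-1$ and the second starting at $\ell_{i-1}+1$, so Theorem~\ref{th:infhat} contributes $\ep{\ell_{i-1},r_{P(i)}}$, precisely the term indexed by $i\in T^+$. For the left boundary: if $\linf{r_0-1}\in S$ it is the first element of $S$ (of the form $[-\..r]$), so the pair it forms with the first middle interval, of index $\himin$, yields $\ep{\ell_{\himin-1},r_0}$; if instead $\linf{r_0-1}\notin S$, the first element of $S$ is the middle interval $[\ell_{\himin-1}+1\..r_{\himin}-1]$, of the form $[\ell\..-]$, so the ``first interval'' clause of Theorem~\ref{th:infhat} contributes $\cgen{[\ell_{\himin-1}+1\..\rightarrow)}$; these are exactly the two cases of $U^-$. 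The right boundary is handled symmetrically, using the ``last interval'' clause, to produce $U^+$. Assembling the disjoint pieces (disjointness being part of Theorem~\ref{th:infhat}) gives the stated union.

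Finally, for $T=\emptyset$ no middle interval survives, so $S\subseteq\{\?\linf{r_0-1},\rinf{\ell_{n-1}+1}\?\}$, and I would split into the four membership sub-cases. When both are present $S$ is a single consecutive pair, giving $\ep{\ell_{n-1},r_0}$; when only $\rinf{\ell_{n-1}+1}$ (resp.\ only $\linf{r_0-1}$) is present, $S$ is a singleton whose ``first'' (resp.\ ``last'') clause yields $\cgen{\rinf{\ell_{n-1}+1}}$ (resp.\ $\cgen{\linf{r_0-1}}$); when neither is present $S=\emptyset$ and Theorem~\ref{th:infhat} returns $\sing\emptyset=1$. These are the four listed alternatives. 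The work is essentially bookkeeping, and the main obstacle is purely notational: tracking the $\pm1$ index shifts through the $\ep{\cdot,\cdot}$ terms and correctly pairing each consecutive pair of $S$ (including the two boundary pairs) with its predecessor index $P(i)$, so that the interior terms align with $T^+$ and the two extreme pairs collapse cleanly into $U^-$ and $U^+$.
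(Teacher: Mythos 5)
Your proposal is correct and follows exactly the route the paper itself takes (and only sketches): express $B$ as the meet of $\cgen I$ over its critical intervals via Theorem~\ref{th:crit}, select those with $\sing I\leq A$ as in~(\ref{eq:psc}), and evaluate the resulting meet with Theorem~\ref{th:infhat}, with the boundary and $T=\emptyset$ cases handled just as in the statement. The index bookkeeping in your consecutive-pair and extreme-clause computations checks out.
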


In the case of an infinite antichain $B$, if $T\neq\emptyset$ has no minimum
(maximum), one needs to eliminate $U^-$ ($U^+$, respectively) from the result.

\section{Containment operators}

Along the lines of Brouwerian difference, it is possible to define four
binary operators\footnote{A warning: although the notation
is reminiscent of binary \emph{relations}, the reader should
keep in mind that we are defining binary \emph{operations}.} that have
specific applications in information retrieval:
\begin{align}
\label{eq:cont}
 A \not\trianglerighteq B&= \{\? I\in A\mid \nexists\, J \in B \text{ such that }
 J \subseteq I\?\} = A\setminus \downset B\\
 A \trianglerighteq B&= \{\? I\in A\mid \exists\, J \in B \text{ such that } J
 \subseteq I\?\} = A\cap\downset B\\
 A \not\trianglelefteq B&= \{\? I\in A\mid \nexists\, J \in B \text{ such
 that } J \supseteq I\?\} = A\setminus \upset B\\
\label{eq:lastcont}
 A \trianglelefteq B&= \{\? I\in A\mid \exists\, J \in B \text{ such that } J
 \supseteq I\?\} = A\cap\upset B
\end{align}
These operators are named ``not containing'', ``containing'', ``not contained
in'' and ``contained in'', respectively, in~\cite{CCBASTSFI}. They are an
essential tool in finding part of a text satisfying further positional constraints: for example, if $K$
denotes the antichain of intervals of text where a certain set of keywords appears, and $T$ denotes the antichain of
intervals specifying which parts of the text are titles, $K\trianglelefteq T$
contains the intervals where the keywords appear inside a title.

 Note that:
\begin{eqnarray*}
 A \not\trianglerighteq B&=&A-B\\
 A \trianglerighteq B&=&A-(A-B).
\end{eqnarray*}
The first equality is due to~(\ref{eq:diff}), whereas the second follows from observing 
that the pseudo-difference with a subset is just complementation.
Other operators definable by pseudo-difference are the strict versions of
$\not\trianglerighteq$ and $\trianglerighteq$:
\begin{align*}
\label{eq:cont}
 A - (B - A) &=  A \notstrcont B= \{\? I\in A\mid \nexists\, J \in B
 \text{ such that } J \subset I\?\}\\
 A - ( A - (B - A)  )&=  A \mathbin\rhd B= \{\? I\in A\mid \exists\, J \in B
 \text{ such that } J \subset I\?\}
\end{align*}
These operators are important because $A\notstrcont B$ contains exactly the
intervals in $A$ that do not disappear by minimization in $A\vee B$. In a formula,
\begin{equation}
\label{eq:gsupcup}
A\vee B = (A\notstrcont B) \vee (B\notstrcont A)= (A\notstrcont B) \cup (B\notstrcont A).
\end{equation}
The two operators $\trianglelefteq$ and $\not\trianglelefteq$  do not seem to admit an easy description 
in terms of lattice operators, albeit we can always, of course, resort to
unique $\lor$-representations by \ORirr elements: the
definitions~(\ref{eq:cont})-(\ref{eq:lastcont}) can be indeed rewritten in any lattice in which such a representation exists, reading
the $\in$ symbol as ``is an element of the \ORrepr'' and
replacing containment with $\leq$.
 
Since these operators are defined by taking a subset of elements (of the
\ORrepr) of the first operand that depends only on the second
operand, any chain of applications of these operands can be permuted without affecting the results. In other words,
as noted in~\cite{CCBASTSFI},
\[
(A\ominus B)\oplus C = (A\oplus C)\ominus B\quad \ominus,\oplus \in
\{\?\trianglerighteq,\trianglelefteq,\not\trianglerighteq,\not\trianglelefteq\?\}.
\]
We note that an immediate consequence of $\not\trianglerighteq$ being a
lower adjoint is that
\[
(A\vee B)\not\trianglerighteq C =(A\not\trianglerighteq C )\vee
(B\not\trianglerighteq C ).
\]
This property is a form of distributivity. Some pseudo-distributivity properties are listed
in the following:
\begin{theorem}
\label{th:qdist}
Let $A,B,C\in \E_O$. Then,
\begin{align*}
A \not\trianglerighteq (B\wedge C) = (A \not\trianglerighteq B) \vee (A
\not\trianglerighteq C)\\
A \not\trianglerighteq (B\vee C) = (A \not\trianglerighteq B) \cap (A
\not\trianglerighteq C)\\
A \trianglerighteq (B\wedge C) = (A \trianglerighteq B) \cap (A
\trianglerighteq C)
\end{align*}
\end{theorem}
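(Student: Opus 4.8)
The plan is to translate all three identities into the set-theoretic language of lower sets, where each one reduces to a De Morgan law combined with the fact that the embedding $\downset(-)$ converts lattice operations into set operations. Recall from the definitions of the containment operators that $A\not\trianglerighteq B = A\setminus\downset B$ and $A\trianglerighteq B = A\cap\downset B$, where $\downset$ is taken in $(\I_O,\supseteq)$, so that $\downset B=\{\?I\mid \exists J\in B,\ J\subseteq I\?\}$. The two structural facts I would invoke throughout are $\downset(B\vee C)=\downset B\cup\downset C$ (established in the proof of Theorem~\ref{th:antisup}, since the join in $\E_O$ corresponds to the union of the generated lower sets) and $\downset(B\wedge C)=\downset B\cap\downset C$ (this is precisely condition~(\ref{eq:cap}), valid here because $\E_O$ is a lattice by Theorem~\ref{th:antiinf}).

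For the second identity I would compute directly, $A\not\trianglerighteq(B\vee C)=A\setminus\downset(B\vee C)=A\setminus(\downset B\cup\downset C)=(A\setminus\downset B)\cap(A\setminus\downset C)=(A\not\trianglerighteq B)\cap(A\not\trianglerighteq C)$, the middle equality being De Morgan; equivalently this is just~(\ref{eq:int}) rewritten, since $A\not\trianglerighteq B=A-B$ by~(\ref{eq:diff}). The third identity is entirely parallel, with $\cap$ in place of $\setminus$: $A\trianglerighteq(B\wedge C)=A\cap\downset(B\wedge C)=A\cap(\downset B\cap\downset C)=(A\cap\downset B)\cap(A\cap\downset C)=(A\trianglerighteq B)\cap(A\trianglerighteq C)$.

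The first identity needs one extra observation, and this is the only point requiring care. Dually to the second one, $A\not\trianglerighteq(B\wedge C)=A\setminus\downset(B\wedge C)=A\setminus(\downset B\cap\downset C)=(A\setminus\downset B)\cup(A\setminus\downset C)$, which equals $(A\not\trianglerighteq B)\cup(A\not\trianglerighteq C)$. The statement, however, asks for a \emph{join} rather than a \emph{union}. To close the gap I would note that both $A\not\trianglerighteq B$ and $A\not\trianglerighteq C$ are subsets of the single antichain $A$, so their union is again an antichain; every one of its elements is therefore $\subseteq$-minimal, and by the description of $\vee$ in Theorem~\ref{th:antisup} the join coincides with the union. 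This collapses $\cup$ into $\vee$ and concludes the argument.

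I do not expect a genuine obstacle here: the work is bookkeeping rather than difficulty. The two things to keep straight are that $\downset$ is taken in the reverse-inclusion order, so that containment directions are not accidentally flipped, and the $\cup$-versus-$\vee$ identification in the first identity, which is exactly where one could be tempted to assert an equality that holds only because the operands live inside a common antichain.
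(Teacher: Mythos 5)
Your proof is correct, but it follows a genuinely different route from the paper's. The paper argues algebraically at the level of the Brouwerian structure: it dispatches the first identity by citing the classical Brouwerian-algebra law $A-(B\wedge C)=(A-B)\vee(A-C)$, points to~(\ref{eq:int}) for the second, and derives the third by composing the first two through $A\trianglerighteq X = A-(A-X)$. You instead unwind everything to the lower-set level, using $A\not\trianglerighteq B=A\setminus\downset B$ and $A\trianglerighteq B=A\cap\downset B$ together with $\downset(B\vee C)=\downset B\cup\downset C$ and $\downset(B\wedge C)=\downset B\cap\downset C$, so that all three identities become instances of De Morgan (or plain associativity of $\cap$ for the third). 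Your one point of genuine care --- that the union $(A\not\trianglerighteq B)\cup(A\not\trianglerighteq C)$ is a subset of the antichain $A$ and hence already an antichain, so it coincides with the join by Theorem~\ref{th:antisup} --- is handled correctly and is exactly where a sloppier argument would go wrong. What each approach buys: the paper's derivation is shorter and, as it remarks, transfers to any Brouwerian algebra with unique $\vee$-representations by $\vee$-irreducible elements, but it leans on an external citation for the first law; yours is fully self-contained and elementary, and as a by-product actually \emph{proves} the ``well known'' first identity and the unproved assertion~(\ref{eq:int}) for $\E_O$, at the cost of being specific to antichain completions where the $\downset(-)$ embedding is available.
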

\begin{proof}
The first equality is well known~\cite{McTCECA}; the second was observed in~(\ref{eq:int}).
For the third one,
\begin{align*}
A \trianglerighteq (B\wedge C) & = A - ( A - (B\wedge C) )\\
&= A - (( A - B) \vee (A - C) )\\
&= (A - ( A - B))\cap (A-(A - C) )\\
&= (A \trianglerighteq B)\cap (A\trianglerighteq C).\qed\\
\end{align*}
\end{proof}

Finally, other distributivity properties
involving the containment operators hold: 
\begin{theorem}
\begin{enumerate}
  \item\label{enu:distr1} $\trianglelefteq$ and $\not\trianglerighteq$ are left-distributive over $\vee$; that is, 
  for every $A,B,C\in \E_O$ and $\oplus \in \{\?\trianglelefteq,\not\trianglerighteq\?\}$ we have 
	\[
	(A\vee B)\oplus C =(A\oplus C )\vee (B\oplus C ).
	\]
  \item\label{enu:distr2} $\trianglerighteq$ is right-distributive over $\vee$;
  that is, for every $A,B,C\in \E_O$ we have
	\[
	A\trianglerighteq (B \vee C) = (A\trianglerighteq B )\vee (A\trianglerighteq
	C ).
	\]
  \item\label{enu:distr3} No other distributive property of any of the operators $\{\?\trianglelefteq,\not\trianglerighteq,\trianglerighteq,\not\trianglelefteq\?\}$
  over any of $\{\?\vee,\wedge\?\}$ holds.
\end{enumerate}
\end{theorem}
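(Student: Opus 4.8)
The plan is to dispatch the two positive items structurally and then refute item~(\ref{enu:distr3}) by a short, explicit table of counterexamples in $\E_3$. The key preliminary observation is that each of the four operators has the shape $A\oplus C=\{\,I\in A\mid P_C(I)\,\}$, where the predicate $P_C$ depends only on the second operand: for $\not\trianglerighteq C$ (``$I$ contains no interval of $C$'') and for $\trianglelefteq C$ (``$I$ is contained in some interval of $C$'') the predicate is \emph{downward closed} under inclusion, i.e.\ if it holds for $I$ it holds for every $I'\subseteq I$; for $\trianglerighteq C$ and $\not\trianglelefteq C$ it is instead upward closed. For item~(\ref{enu:distr1}) I would isolate the lemma: if $P$ is downward closed under inclusion then $\{\,I\in A\vee B\mid P(I)\,\}=\{\,I\in A\mid P(I)\,\}\vee\{\,I\in B\mid P(I)\,\}$. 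Since, by item~(\ref{enu:sup}), $A\vee B$ consists of the $\subseteq$-minimal elements of $A\cup B$, the inclusion from left to right is immediate; for the converse, if some $I$ were minimal in the filtered union but not in $A\cup B$, there would be $I'\subsetneq I$ in $A\cup B$, and downward closure would give $P(I')$, reinserting $I'$ into the filtered union and contradicting minimality. Applying the lemma to the predicates of $\not\trianglerighteq$ and $\trianglelefteq$ yields~(\ref{enu:distr1}); for $\not\trianglerighteq$ this also follows, as already noted, from its being a lower adjoint.

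For item~(\ref{enu:distr2}) I would use the closed form $A\trianglerighteq B=A\cap\downset B$ and the identity $\downset(B\vee C)=\downset B\cup\downset C$, valid because $B\vee C$ generates, in $(\I_O,\supseteq)$, the same lower set as $B\cup C$ (every interval of $B\cup C$ contains a $\subseteq$-minimal one, as $\I_O$ satisfies the ACC). Then $A\trianglerighteq(B\vee C)=(A\cap\downset B)\cup(A\cap\downset C)$; both terms are subsets of the antichain $A$, so their union is itself an antichain and hence equals their join, which is $(A\trianglerighteq B)\vee(A\trianglerighteq C)$.

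For item~(\ref{enu:distr3}) I would exhibit, for each of the $13$ remaining combinations of operator, side and lattice operation, a witness in $\E_3$ with $O=\sing{0,1,2}$, computing all joins and meets through items~(\ref{enu:sup}) and~(\ref{enu:inf}). Right-distributivity over $\wedge$ fails for all four operators at once, because $A\oplus(B\wedge C)$ is always a subset of $A$ while $(A\oplus B)\wedge(A\oplus C)$ spans pairs and leaves $A$; the single witness $A=\{\,\singint0,\singint1\,\}$, $B=\{\,\singint0\,\}$, $C=\{\,\singint1\,\}$ (with $B\wedge C=\{\,[0\..1]\,\}$) refutes it for each operator. Left-distributivity over $\vee$ fails for the two upward-closed operators $\trianglerighteq$ and $\not\trianglelefteq$, since minimization inside $A\vee B$ can delete a satisfying interval in favour of a smaller, non-satisfying one (take $A=\{\,[0\..1]\,\}$, $B=\{\,\singint0\,\}$, $C=\{\,\singint1\,\}$ for $\trianglerighteq$, and $A=\{\,[0\..2]\,\}$, $B=\{\,\singint0\,\}$, $C=\{\,[0\..1]\,\}$ for $\not\trianglelefteq$). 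Right-distributivity over $\vee$ fails for $\not\trianglerighteq$, $\trianglelefteq$ and $\not\trianglelefteq$: for $\not\trianglerighteq$ immediately from $A-(B\vee C)=(A-B)\cap(A-C)$ in~(\ref{eq:int}), an intersection rather than a join, and for the other two by $A=B=\{\,[0\..1]\,\}$, $C=\{\,\singint0\,\}$ and by $A=\{\,\singint0,\singint1\,\}$, $B=\{\,\singint0\,\}$, $C=\{\,\singint1\,\}$ respectively. Finally, the four left-distributivity-over-$\wedge$ cases (spanning-then-filtering versus filtering-then-spanning) are covered by $A=\{\,\singint0\,\}$, $B=\{\,\singint2\,\}$, $C=\{\,\singint1\,\}$ for $\trianglerighteq$ and $\not\trianglerighteq$, and by $A=\{\,\singint0\,\}$, $B=\{\,\singint1\,\}$, $C=\{\,\singint0,\singint1\,\}$ for $\trianglelefteq$ and $\not\trianglelefteq$.

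The main obstacle is entirely in item~(\ref{enu:distr3}): I must ensure the list of $13$ cases is genuinely exhaustive and that each chosen witness actually separates the two sides for the operator it is assigned to — in particular the $\vee$-side examples do not amalgamate as neatly as the $\wedge$-side ones, so two or three distinct witnesses are unavoidable there. Once $A\vee B$ and $A\wedge B$ are written out via items~(\ref{enu:sup}) and~(\ref{enu:inf}), verifying each entry is a routine finite check.
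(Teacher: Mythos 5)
Your proposal is correct, and items~(\ref{enu:distr1}) and~(\ref{enu:distr3}) follow essentially the paper's route: the paper proves the $\trianglelefteq$ half of~(\ref{enu:distr1}) by exactly the minimality argument you package as a lemma on inclusion-downward-closed predicates (and, like you, disposes of $\not\trianglerighteq$ via its being a lower adjoint), and it settles~(\ref{enu:distr3}) with two tables of thirteen explicit counterexamples that largely coincide with yours (all of your witnesses check out). Where you genuinely diverge is item~(\ref{enu:distr2}): the paper computes $A\trianglerighteq(B\vee C)=A-(A-(B\vee C))=A-((A-B)\cap(A-C))$, replaces the intersection by a meet --- justified by the observation that intervals in $(A-B)\wedge(A-C)$ but not in $(A-B)\cap(A-C)$ are spans of two distinct intervals of $A$ and hence cannot lie below an interval of $A$ --- and then applies the Brouwerian identity of Theorem~\ref{th:qdist}. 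Your argument via $A\trianglerighteq B=A\cap\downset B$ and $\downset(B\vee C)=\downset B\cup\downset C$ (valid because, by the ACC, $B\vee C$ and $B\cup C$ generate the same lower set of $(\I_O,\supseteq)$, as already noted in the proof of Theorem~\ref{th:antisup}), followed by the remark that subsets of the antichain $A$ join by plain union, is more elementary and avoids the span argument entirely; what it gives up is the paper's point that the identity is an almost formal consequence of the Brouwerian structure. Two minor touch-ups: in your lemma for~(\ref{enu:distr1}), state explicitly that the filtered union is a subset of $A\cup B$ so that minimality transfers to it (you implicitly use this); and in~(\ref{enu:distr3}) the phrase ``immediately from~(\ref{eq:int})'' for the failure of right-distributivity of $\not\trianglerighteq$ over $\vee$ still requires one witness where $(A-B)\cap(A-C)\neq(A-B)\vee(A-C)$ --- e.g.\ $A=C=\sing{\singint a}$, $B=\sing{[a\..b]}$, as in the paper --- since the identity alone does not rule out that the two sides always coincide.
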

\begin{proof}
For (\ref{enu:distr1}), as we observed above, the case
$\oplus=\mathbin{\not\trianglerighteq}$ depends on $\not\trianglerighteq$ being
a lower adjoint. 

Let us prove the case $\oplus=\mathbin\trianglelefteq$. The
left-hand side is formed by the set of minimal\footnote{From this section,
``minimal'' will always mean minimal by inclusion, that is,
$\supseteq$-maximal.} intervals in $A\cup B$ that are contained in some interval of $C$. Such intervals are either in $A\trianglelefteq C$ or $B\trianglelefteq C$, and they are obviously minimal in $(A\trianglelefteq C)\cup(B\trianglelefteq C)\subseteq A \cup B$.
Thus,
\[
(A\vee B)\trianglelefteq C \subseteq (A\trianglelefteq C )\vee
(B\trianglelefteq C ).
\]
The right-hand side is made by minimal intervals in $(A\trianglelefteq C
)\cup (B\trianglelefteq C)$. Let $I$ be such an interval, and assume without loss of
generality that it belongs to $A$. We have necessarily that $I\in A\vee B$, for
otherwise there should be an interval $J\in B$ such that $J\subset I$. Such an
interval would be contained \textit{a fortiori} in some interval of $C$, and
this contradicts the minimality of $I$.

For (\ref{enu:distr2}),
\begin{align*}
A\trianglerighteq (B \vee C) &= A - ( A - (B\vee C ) ) \\
&= A - ( (A-B)\cap
(A-C))\\
 	&= A - ((A-B)\wedge(A-C))\\
 	&= (A - (A-B))\vee
(A-(A-C))\\
&=(A\trianglerighteq B )\vee (A\trianglerighteq C ),
\end{align*}
where we used Theorem~\ref{th:qdist}, and the
fact that intervals in $(A-B)\wedge(A-C)$ but not in $(A-B)\cap
(A-C)$ are spans of two distinct intervals of $A$, so they cannot be included in
an interval of $A$.

For (\ref{enu:distr3}), there is one counterexample for each instance in
Table~\ref{tab:counterexamplessup} and~\ref{tab:counterexamplesinf}.
\end{proof}
\begin{table}
\begin{center}
\renewcommand{\arraystretch}{1.7}
\begin{tabular}{r|c|c}
%&\multicolumn{2}{c|}{$\vee$}&\multicolumn{2}{c}{$\wedge$}\\
$\oplus$ &$A\oplus(B\vee C)$ & $(A\vee B)\oplus C$\\
\hline 
$\not\trianglerighteq$& $A=C=\sing{[a]}$, $B=\sing{[a\..b]}$ & --- \\
$\trianglerighteq$ & --- & $A=C=\sing{[a\.. b]}$, $B=\sing{[a]}$\\
$\not\trianglelefteq$ & $A=B=\sing{[a]}$, $C=\sing{[b]}$ & $A=\sing{[a\.. b]}$,
$B=C=\sing{[a]}$\\
$\trianglelefteq$ & $A=B=\sing{[a\.. b]}$, $C=\sing{[a]}$ & --- \\
\end{tabular}
\caption{\label{tab:counterexamplessup}Counterexamples to missing distributivity
laws for containment operators over $\vee$.}
\end{center}
\end{table}

\begin{table}
\begin{center}
\renewcommand{\arraystretch}{1.7}
\begin{tabular}{r|c|c}
%&\multicolumn{2}{c|}{$\vee$}&\multicolumn{2}{c}{$\wedge$}\\
$\oplus$& $A\oplus(B\wedge C)$ & $(A\wedge B)\oplus C$\\
\hline 
$\not\trianglerighteq$ & $A=\sing{[a],[b]}$, $B=\sing{[a]}$, $C=\sing{[b]}$ & $A=\sing{[a]}$, $B=\sing{[b]}$, $C=\sing{[c]}$, $a\sqless c \sqless b$\\
$\trianglerighteq$ &  $A=\sing{[a],[b]}$, $B=\sing{[a]}$, $C=\sing{[b]}$& $A=\sing{[a]}$, $B=\sing{[b]}$, $C=\sing{[c]}$, $a \sqless c \sqless b$\\
$\not\trianglelefteq$ & $A=\sing{[a\.. b]}$, $B=\sing{[a]}$, $C=\sing{[b]}$& $A=\sing{[a]}$, $B=\sing{[b]}$, $C=\sing{[a],[b]}$\\
$\trianglelefteq$ & $A=\sing{[a\.. b]}$, $B=\sing{[a]}$, $C=\sing{[b]}$ & $A=\sing{[a]}$, $B=\sing{[b]}$, $C=\sing{[a],[b]}$
\end{tabular}
\caption{\label{tab:counterexamplesinf}Counterexamples to distributivity
laws for containment operators over $\wedge$.}
\end{center}
\end{table}

% \begin{table}
% \begin{center}
% \begin{tabular}{r|c|c|c|c}
% %&\multicolumn{2}{c|}{$\vee$}&\multicolumn{2}{c}{$\wedge$}\\
% $\oplus$ &$A\oplus(B\vee C)$ & $(A\vee B)\oplus C$ & $A\oplus(B\wedge C)$ & $(A\wedge B)\oplus C$\\
% \hline 
% \multirow{2}{*}{$\not\trianglerighteq$}& $A \not\trianglerighteq (B \notstrcont C ) \vee A \not\trianglerighteq (C \notstrcont B )$ & \multirow{2}{*}{distributes} &\multirow{2}{*}{$(A \not\trianglerighteq B)\vee(A \not\trianglerighteq C)$}\\
% & $(A \not\trianglerighteq B)\cap (A \not\trianglerighteq C)$ & &\\[1em]
% $\trianglerighteq$ & distributes & $(A \notstrcont B )\trianglerighteq C \vee(B \notstrcont A )\trianglerighteq C$ & $(A \trianglerighteq B)\cap(A \trianglerighteq C)$&\\
% $\not\trianglelefteq$ &  distributes &  $(A \notstrcont B )\not\trianglelefteq C \vee(B \notstrcont A )\not\trianglelefteq C$ & &\\
% $\trianglelefteq$ & $A \trianglelefteq (B \notstrcont C ) \vee A \trianglelefteq (C \notstrcont B )$ & distributes & &
% \end{tabular}
% \end{center}
% \end{table}

\section{Ordered operators}

For completeness, we introduce two final operators that are fundamental in 
information retrieval. For $A,B\neq 1$ the \emph{ordered non-overlapping meet operator} 
\[
A\mathbin<B=\bigvee\bigl\{\?\{\?[\min I\..\max J]\?\} \mid I\in A, J\in B, \max I
\sqless \min J\?\bigr\}
\]
returns the minimal intervals spanned by an interval of $A$ followed (without overlaps) by an interval of $B$.
Such an operator is useful when looking, for example, for a sequence of terms in
a given order in a window of size $k$. It suffices to compute the antichain
generated by the operator, and check whether it contains some interval
of length at most $k$.

The \emph{block operator}
\[
A\mathbin\square B = \{\?[\min I\..\max J] \mid I\in A, J\in B, \max
I+1=\min J\?\}
\]
returns the intervals formed by two consecutive intervals from $A$ and $B$ (such intervals form an antichain).
The block operator is used to implement phrase search. We define $1$ to be the identity for both operators.

Both operators are associative, and moreover the ordered meet distributes
with respect to joins: more precisely,
\begin{theorem}
We have
\begin{align*}
(A\vee B) \mathbin< C &= (A \mathbin<C)\vee (B\mathbin< C)\\ 
A \mathbin< (B \vee C) &= (A \mathbin< B)\vee (A \mathbin<C) 
\end{align*}
\end{theorem}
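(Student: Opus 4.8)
The plan is to reduce both identities to their generating families of spans and to use that the join in $\E_O$ of a family of singleton antichains is exactly the antichain of $\subseteq$-minimal spans (this is how joins are computed in $\E_O$; cf.\ Theorem~\ref{th:antisup}, together with the completeness of $\E_O$). First I would dispose of the degenerate cases in which an operand equals the identity $1=\sing\emptyset$. Since $1$ is the top of $\E_O$ we have $X\vee 1=1$, and a one-line check on the definition shows that every span $[\min I\..\max K]$ of $A\mathbin<C$ contains both its left factor $I\in A$ (because $\max I\sqless\min K\sqleq\max K$) and its right factor $K\in C$ (because $\min I\sqleq\max I\sqless\min K$); hence $A\mathbin<C\leq A$ and $A\mathbin<C\leq C$. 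These two inequalities make every case involving $1$ collapse at once: e.g.\ for $A=1$ in the left law, $(1\vee B)\mathbin<C=1\mathbin<C=C$, while $(1\mathbin<C)\vee(B\mathbin<C)=C\vee(B\mathbin<C)=C$ since $B\mathbin<C\leq C$. For the remaining cases all operands are $\neq1$, so the defining formula applies verbatim.

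The first real ingredient is monotonicity of $\mathbin<$ in each argument: if $A\leq A'$ then $A\mathbin<C\leq A'\mathbin<C$, and symmetrically on the right. The point is that replacing $I\in A$ by a smaller $I'\subseteq I$ drawn from $A'$ only shrinks the span: the non-overlap condition survives because $\max I'\sqleq\max I\sqless\min K$, and $[\min I'\..\max K]\subseteq[\min I\..\max K]$ because $\min I\sqleq\min I'$. The analogous check on the right operand moves $\min J$ (and hence the left endpoint of the span) rather than $\max J$, but is identical in spirit. This is the step where attention is required: one must keep track of which endpoint of the span each replacement affects and confirm that the strict inequality $\max I\sqless\min J$ is never destroyed. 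Since we only ever pass to sub-intervals, it never is.

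With monotonicity in hand I would prove the left law $(A\vee B)\mathbin<C=(A\mathbin<C)\vee(B\mathbin<C)$ by two inequalities. For $\geq$: from $A,B\leq A\vee B$ and monotonicity, both $A\mathbin<C$ and $B\mathbin<C$ lie below $(A\vee B)\mathbin<C$, so their join, being the least upper bound, does too. For $\leq$: every interval of $(A\vee B)\mathbin<C$ is a span $[\min I\..\max K]$ with $I\in A\vee B\subseteq A\cup B$; since $I$ lies in $A$ or in $B$, the pair $(I,K)$ already contributes to $A\mathbin<C$ or to $B\mathbin<C$, so that span contains an interval of the corresponding operand, and therefore an interval of the join. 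The right law $A\mathbin<(B\vee C)=(A\mathbin<B)\vee(A\mathbin<C)$ follows from the mirror-image argument, using right-monotonicity and $B\vee C\subseteq B\cup C$.

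I expect no serious obstacle here: the only genuine subtleties are bookkeeping ones. The substantive check is the monotonicity step—verifying that shrinking an operand preserves the non-overlap condition and only shrinks the span—while the one thing easy to overlook is the treatment of the identity $1$, which is handled cleanly by the auxiliary bounds $A\mathbin<C\leq A$ and $A\mathbin<C\leq C$. Everything else is the standard ``compare generating sets of singletons and take $\subseteq$-minimal elements'' computation licensed by Theorem~\ref{th:antisup}.
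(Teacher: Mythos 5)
Your proof is correct, and it reaches the same two identities by a noticeably different organization than the paper's. The paper proves the first law directly as a set equality, chasing individual intervals in both directions and deriving contradictions with minimality ("if $I$ does not belong to $A\mathbin<C$, there is a smaller span, contradicting minimality in $(A\vee B)\mathbin<C$"); it dismisses the cases involving $1$ as "trivial" and the second law as "similar". You instead isolate the one combinatorial fact that both directions secretly rely on --- monotonicity of $\mathbin<$ in each argument, i.e.\ that replacing a factor by a sub-interval preserves the non-overlap condition and shrinks the span --- and then obtain the inclusion $(A\mathbin<C)\vee(B\mathbin<C)\leq(A\vee B)\mathbin<C$ purely formally from the universal property of the join, with the reverse inclusion following from $A\vee B\subseteq A\cup B$. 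This buys two things: the monotonicity lemma makes explicit the step the paper's "smaller span" contradictions use implicitly (a generating interval of $A\vee B$ need not lie in the antichain $A$ itself, and conversely an interval of $A$ need not survive into $A\vee B$, so one must pass to a dominated sub-interval and check the span only shrinks); and your auxiliary bounds $A\mathbin<C\leq A$ and $A\mathbin<C\leq C$ give a clean, uniform treatment of the identity cases that the paper waves off. The paper's version is shorter; yours is more self-contained and makes the symmetry with the second law genuinely a mirror image rather than an assertion.
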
 
\begin{proof}
If any of the antichain is $1$, the result is trivial. We prove $(A\vee
B)<C=(A\mathbin<C)\vee(B\mathbin<C)$.
The other case is similar.

Consider an interval $I$ in $(A\vee B)<C$. It is formed by the span of an interval that we
can assume without loss of generality to belong to $A$, and of an interval in $C$. We know that
$I$ is minimal among all spans between elements of $A\vee B$ and $C$. 
If $I$ does not belong to $A\mathbin<C$, this means that there is
another smaller span, which contradicts minimality in $(A\vee B)<C$. If $I$
belongs to $A\mathbin<C$ but not to $(A\mathbin<C)\vee(B\mathbin<C)$, it means that there's a smaller span in $(B\mathbin<C)$, again contradicting
the minimality of $I$. So $(A\vee B)<
C\subseteq(A\mathbin<C)\vee(B\mathbin<C)$

Consider now an interval $J$ in $(A\mathbin<C)\vee(B\mathbin<C)$, and assume
without loss of generality that it comes from $A\mathbin<C$. If $J$ does not
belong to $(A\vee B)<C$, it means that there is a smaller, minimal interval in $(A\vee B)<C$. But such an interval
is \textit{a fortiori} minimal in $(A\mathbin<C)$ or $(B\mathbin<C)$, which
contradicts the minimality of $J$.
\end{proof}

The example $A = \{\?[0\..1]\?\}$, $B = \{\?[0]\?\}$, $C = \{\?[2]\?\}$
shows that the block operator does not satisfy the first distributivity law of
the theorem; the example  $A = \{\?[0]\?\}$, $B = \{\?[1\..2]\?\}$, $C =
\{\?[2]\?\}$ that it does not satisfy the second one.

\section{Ranking and height}
\label{sec:rank}

Every finite distributive lattice $L$ is \emph{ranked}: there is a rank function
$\frak r:L\to \N$ such that $\frak r(x)=\frak r(y)+1$ whenever $x$ covers $y$, and
$\frak r(0)=0$. For a finite distributive lattice, the rank of an element is
the number of \ORirr elements it dominates. This characterization makes the
explicit computation of the rank very simple, if $O$ is finite; from now on, we
let $O=\{\?0,1,\dots,n-1\?\}$.
\begin{theorem}
Let $[\ell\..r]\in \I_O$; then,
\[
\frak r(\{\?[\ell\..r]\?\}) = (1+\ell)(n-r).
\]
Moreover, let $k>0$ and $X=\{\? [\ell_0\..r_0],\dots,[\ell_k\..r_k] \?\}\in\E_O$ with
$\ell_0\sqless\ell_1\sqless\dots\sqless\ell_k$. Then,
\[
	\frak r(X)=(1+\ell_0)(n-r_0)+\sum_{i=1}^k (\ell_i-\ell_{i-1})(n-r_i).
\]
\end{theorem}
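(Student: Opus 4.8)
The plan is to read off the rank from the characterization quoted just above: in a finite distributive lattice $\frak r(X)$ is the number of \ORirr elements dominated by $X$. By Theorem~\ref{th:orrepr} these are exactly the singleton antichains $\sing{[\ell'\..r']}$ with $[\ell'\..r']\in\I_O$ nonempty, and unwinding the order~(\ref{eq:alexorder}) gives $\sing{[\ell'\..r']}\le X$ iff some interval of $X$ is contained in $[\ell'\..r']$; the one leftover \ORirr element $\sing\emptyset=1$ lies below no nonempty $X$, so it never contributes. Hence $\frak r(X)$ counts the nonempty intervals $[\ell'\..r']$ of $O$ that contain at least one member of $X$. Identifying such an interval with the pair $(\ell',r')$, where $0\sqleq\ell'\sqleq r'\sqleq n-1$, this is the cardinality of the union $\bigcup_i R_i$ of the rectangles $R_i=\{\?(\ell',r')\mid \ell'\sqleq\ell_i,\ r'\sqgeq r_i\?\}$ (each pair in $R_i$ automatically satisfies $\ell'\sqleq r'$).

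For a single interval this is immediate: $\ell'$ ranges over $\{\?0,\dots,\ell\?\}$ and $r'$ over $\{\?r,\dots,n-1\?\}$, so $\frak r(\sing{[\ell\..r]})=(1+\ell)(n-r)$. The fact that organizes the general case is that, since $X$ is an antichain in a \emph{totally} ordered set, ordering its intervals by left extreme also orders them by right extreme: $\ell_0\sqless\dots\sqless\ell_k$ forces $r_0\sqless\dots\sqless r_k$, because $\ell_i\sqless\ell_j$ with $r_i\sqgeq r_j$ would yield $[\ell_j\..r_j]\subseteq[\ell_i\..r_i]$, contradicting incomparability.

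With both coordinates monotone I would compute $|\bigcup_i R_i|$ by peeling the staircase column by column rather than by inclusion--exclusion. The key step is to show $R_i\setminus(R_0\cup\dots\cup R_{i-1})=\{\?(\ell',r')\mid \ell_{i-1}\sqless\ell'\sqleq\ell_i,\ r'\sqgeq r_i\?\}$: for every index $j<i$ we have $r_j\sqless r_i\sqleq r'$, so membership in the earlier rectangle $R_j$ collapses to the single condition $\ell'\sqleq\ell_j$, and the failure of all of them is precisely $\ell_{i-1}\sqless\ell'$. This strip has size $(\ell_i-\ell_{i-1})(n-r_i)$, and summing the disjoint pieces $R_0,\ R_1\setminus R_0,\dots,\ R_k\setminus(R_0\cup\dots\cup R_{k-1})$ gives
\[
\frak r(X)=(1+\ell_0)(n-r_0)+\sum_{i=1}^k(\ell_i-\ell_{i-1})(n-r_i).
\]
The only delicate point is this peeling step: one must invoke the monotonicity of the right extremes to see that the $r'$-constraint is inactive for all earlier rectangles, so that the newly added part of $R_i$ is genuinely a half-open strip in the $\ell'$-direction; granting that, the rest is routine bookkeeping.
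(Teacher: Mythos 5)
Your argument is correct, but it takes a genuinely different route from the paper's for the main formula. Both proofs start identically: the rank is the number of \ORirr elements (singleton antichains) dominated, and for a single interval this is the count of intervals $[\ell'\..r']$ with $\ell'\sqleq\ell$ and $r\sqleq r'$, i.e.\ $(1+\ell)(n-r)$. For the general antichain, however, the paper does not count anything directly: it invokes the valuation identity $\frak r(A)+\frak r(B)=\frak r(A\vee B)+\frak r(A\wedge B)$ valid in any distributive lattice, peels off one interval at a time (taking $A$ to be the first $k$ intervals and $B=\sing{[\ell_k\..r_k]}$, with $A\wedge B=\sing{[\ell_{k-1}\..r_k]}$), and telescopes the resulting sum of single-interval ranks. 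You instead compute the cardinality of the union of rectangles $\bigcup_i R_i$ by an explicit staircase decomposition, using the observation that incomparability in a total order forces $r_0\sqless\dots\sqless r_k$ so that $R_i\setminus(R_0\cup\dots\cup R_{i-1})$ is the strip $\ell_{i-1}\sqless\ell'\sqleq\ell_i$, $r'\sqgeq r_i$. Your handling of the boundary cases (the pairs automatically satisfy $\ell'\sqleq r'$, and $\sing\emptyset=1$ never lies below a nonempty antichain) is sound, and the monotonicity lemma is exactly the fact that makes the peeling work. What the paper's approach buys is brevity and independence from the combinatorics of intervals (only the computation of $A\wedge B$ is specific to $\E_O$); what yours buys is a self-contained, purely enumerative argument that does not rely on the modularity of the rank function, only on the Birkhoff-style characterization of rank as the number of dominated \ORirr elements, which the paper already quotes. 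Both are complete proofs.
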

\begin{proof}
Since \ORirr elements are singleton antichains, to rank $\{\?[\ell\..r]\?\}$ we
must compute the number of intervals that contain $[\ell\..r]$, which are
exactly those of the form $[\ell'\..r']$ with $\ell'\sqleq\ell$ and
$r\sqleq r'$, resulting in the first formula above. For the second formula,
we recall that in every distributive lattice
\[
\frak r(A)+\frak r(B)=\frak r(A \vee B) + \frak r(A\wedge B);
\]
taking 
$A=\{\? [\ell_0\..r_0],\dots,[\ell_{k-1}\..r_{k-1}] \?\}$,
$B=\{\?[\ell_k\..r_k]\?\}$ and observing that $A\wedge
B=\{\?[\ell_{k-1}\..r_k]\?\}$, we have that
\[
\frak r(X)=\frak r(\{\?[\ell_0\..r_0],\dots,[\ell_{k-1}\..r_{k-1}] \?\}) +
\frak r([\ell_k\..r_k]) - \frak r([\ell_{k-1}\..r_k]).
\]
Applying the same idea $k$ times, we have
\begin{multline*}
\frak r(X)=\sum_{i=0}^k \frak r(\{\?[\ell_i\..r_i]\?\})- \sum_{i=1}^k
\frak r(\{\?[\ell_{i-1}\..r_i]\?\})\\
=\sum_{i=0}^k (1+\ell_i)(n-r_i)- \sum_{i=1}^k
\frak
(1+\ell_{i-1})(n-r_i),
\end{multline*}
whence the result.
\end{proof}

\begin{corollary}
\label{cor:height}
The rank of the top element of $\E_n$ is $1+n(n+1)/2$. Hence $\E_n$
has height $2+n(n+1)/2$.
\end{corollary}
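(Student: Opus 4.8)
The plan is to get the rank of the top element first and then read off the height from the fact that a finite distributive lattice is graded. Recall that the top of $\E_n$ is $1=\sing\emptyset$ and that $\E_n$ has a unique coatom $1^-=\{\?\singint x\mid x\in O\?\}$, which is therefore covered by $1$. The rank formula just proved applies to antichains of nonempty intervals, so I would apply it to $1^-$ rather than trying to plug in the top directly.

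Writing $1^-=\{\?[0],[1],\dots,[n-1]\?\}$, we have $\ell_i=r_i=i$, so the leading term is $(1+0)(n-0)=n$ and each summand contributes $(\ell_i-\ell_{i-1})(n-r_i)=n-i$. Summing gives $\frak r(1^-)=n+\sum_{i=1}^{n-1}(n-i)=n+\binom n2=n(n+1)/2$. Since $1$ covers $1^-$ and the rank increases by exactly one along a cover, $\frak r(1)=1+n(n+1)/2$, which is the first claim. As a cross-check I would also count join-irreducibles directly: by Theorem~\ref{th:orrepr} the \ORirr elements are precisely the singleton antichains, one for each finite interval of $O$; there are $n(n+1)/2$ nonempty intervals $[\ell\..r]$ with $0\sqleq\ell\sqleq r\sqleq n-1$, plus the empty one, and since $1$ dominates all of them, $\frak r(1)$ equals this total $1+n(n+1)/2$.

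For the height I would invoke that every finite distributive lattice is graded, so all maximal chains from $0$ to $1$ have the same length, namely $\frak r(1)-\frak r(0)=1+n(n+1)/2$ covering steps and hence $2+n(n+1)/2$ elements. Reading the height as the number of elements of a longest chain yields the stated value.

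The only real subtlety, and the step I would be most careful about, is that the rank formula is stated for antichains of \emph{nonempty} intervals and does not cover the top element $\sing\emptyset$ directly; this is exactly why one must route through the coatom (or count join-irreducibles) and account for the single extra cover $1^-<1$. The apparent mismatch between the rank $1+n(n+1)/2$ and the height $2+n(n+1)/2$ is likewise explained here: it is simply the off-by-one between counting covers and counting vertices of a maximal chain. Everything else is routine substitution into the rank formula.
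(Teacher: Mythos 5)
Your proposal is correct and follows essentially the same route as the paper: the paper likewise applies the rank formula to the coatom $1^-=\{\?[0],[1],\ldots,[n-1]\?\}$ to get $\frak r(1^-)=n+\sum_{i=1}^{n-1}(n-i)=n(n+1)/2$ and then lets the statement follow from the single cover $1^-<1$. Your cross-check by counting the $1+n(n+1)/2$ join-irreducible elements dominated by the top is a nice confirmation but not a different argument.
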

\begin{proof}
The rank of $1^-=\{\?[0],[1],\ldots,[n-1]\?\}$ is
 \[
  \frak r(1^-)=n+\sum_{i=1}^{n-1} (n-i) =n(n+1)/2;
 \]
the statement follows immediately.
\end{proof}

\section{Conclusions and open problems}

We have presented a detailed analysis of the \CCB lattice on a locally finite,
totally ordered set. Besides analyzing its basic properties, we have provided
elementary characterizations for all operations, including relative pseudo-complement and
pseudo-difference, and a closed formula for the rank of an element.

It is easy to check that join and meet of two elements are computable in
linear time using simple greedy algorithms. More efficient algorithms for the 
computation of $n$-ary joins and meets, as
well as algorithms for the containment and ordered operators, are described
in~\cite{BoVEOLAMIS}. Finally, in the description of $A\to B$ in the statement of Theorem~\ref{th:psc} all
intervals appear exactly once, and moreover they are enumerated in their
natural order.
This implies that in the finite case it can be turned into
an $O(|A|+|B|+|A\to B|)$ algorithm
to compute the relative pseudo-complement: first, an easy $O(|A|+|B|)$ greedy
algorithm computes $T$. Then, depending on the case conditions (which can be all evaluated in $O(|A|)$) a simple 
loop emits the output (which cannot take more time than $O(|A\to B|)$). Note
that in this particular case we have to state explicitly that the algorithm is
linear \emph{both in the input and in the output size}, as the output might
have roughly the same size as the base set even for a constant-sized
input (e.g., $\sing{\singint{n-1}}\to\sing{\singint{n-2}}=\{\?\singint0,
\singint1,\ldots, \singint{n-2}\?\}$).\footnote{If we admit a symbolic
representations for infinite sets of the form $\cgen{\rinf\ell}$ and $\cgen{\linf r}$, the time bound
becomes $O(|A|+|B|)$ even when $O$ is infinite, as long as the input antichains
are finitely representable.} All these algorithms have been implemented in
LaMa4J (Lattice Manipulation for Java), a free Java library supporting computation in lattices.\footnote{\texttt{\small http://lama4j.di.unimi.it/}}

An interesting open question is that relative to the \emph{width} of the lattice
$\E_n$, that is, the cardinality of a maximum antichain. The sequence of
the first few widths of $\E_n$ for $n=0,1,\ldots$ is
\[1,
2,
3,
7,
17,
44,
118,
338,
1003,
3039,
9466,
30009,\ldots,
\]
which does not appear in the ``On-Line Encyclopedia of Integer
Sequences''~\cite{OEIS}.
At these sizes, the width coincide with the cardinality of the maximum \emph{level
sets} (a level set is the set of elements of given rank). The fact that this
happens at all sizes is known at the \emph{Sperner property}~\cite{EngST}. It
would be interesting to find a closed form for the sequence above, and prove or disprove
the Sperner property for $\E_n$.  
Finally, it would be interesting to relax the totality assumption on the base
set $O$, to include, for instance, tree-structured documents. 

\section{Acknowledgements}

We are very grateful to the anonymous reviewer and to the editor: their
suggestions improved enormously the quality of the presentation.

\bibliography{biblio}

\hyphenation{ Vi-gna Sa-ba-di-ni Kath-ryn Ker-n-i-ghan Krom-mes Lar-ra-bee
  Pat-rick Port-able Post-Script Pren-tice Rich-ard Richt-er Ro-bert Sha-mos
  Spring-er The-o-dore Uz-ga-lis }
\begin{thebibliography}{10}

\bibitem{BoVTREC2005}
Paolo Boldi and Sebastiano Vigna.
\newblock {M}{G}4{J} at {T}{R}{E}{C} 2005.
\newblock In Ellen~M. Voorhees and Lori~P. Buckland, editors, {\em The
  Fourteenth Text REtrieval Conference (TREC 2005) Proceedings}, number SP
  500-266 in Special Publications. NIST, 2005.
\newblock \texttt{\small http://mg4j.di.unimi.it/}.

\bibitem{BoVEOLAMIS}
Paolo Boldi and Sebastiano Vigna.
\newblock Efficient optimally lazy algorithms for minimal-interval semantics.
\newblock {\em Theoretical Computer Science}, 648:8--25, 2016.

\bibitem{ClCSSRR}
Charles L.~A. Clarke and Gordon~V. Cormack.
\newblock Shortest-substring retrieval and ranking.
\newblock {\em ACM Trans. Inf. Syst}, 18(1):44--78, 2000.

\bibitem{CCBASTSFI}
Charles L.~A. Clarke, Gordon~V. Cormack, and Forbes~J. Burkowski.
\newblock An algebra for structured text search and a framework for its
  implementation.
\newblock {\em Comput. J.}, 38(1):43--56, 1995.

\bibitem{CrDATL}
Peter Crawley and Robert~P. Dilworth.
\newblock {\em Algebraic Theory of Lattices}.
\newblock Prentice Hall, 1973.

\bibitem{EngST}
Konrad Engel.
\newblock {\em Sperner Theory}.
\newblock Cambridge Solid State Science Series. Cambridge University Press,
  1997.

\bibitem{ErnEO}
Marcel Ern{\'e}.
\newblock {\em Einf{\"u}hrung in die {O}rdnungstheorie}.
\newblock Bibliographisches Institut, 1982.

\bibitem{ErnABCOT}
Marcel Ern{\'e}.
\newblock The {ABC} of order and topology.
\newblock In {\em Category Theory at Work}, volume~18 of {\em Res. Expo.
  Math.}, pages 57--83. Heldermann Verlag, 1991.

\bibitem{GKPCM}
Ronald~L. Graham, Donald~E. Knuth, and Oren Patashnik.
\newblock {\em Concrete Mathematics}.
\newblock Addison--Wesley, second edition, 1994.

\bibitem{OEIS}
OEIS~Foundation Inc.
\newblock The on-line encyclopedia of integer sequences, 2011.

\bibitem{JohSS}
Peter~T. Johnstone.
\newblock {\em Stone Spaces}.
\newblock Cambridge Studies in Advanced Mathematics. Cambridge University
  Press, 1986.

\bibitem{McTCECA}
J.C.C. McKinsey and Alfred Tarski.
\newblock On closed elements in closure algebras.
\newblock {\em Annals of Mathematics}, 47(1):122--162, 1946.

\bibitem{OreTG}
Oynstein Ore.
\newblock {\em Theory of Graphs}, volume XXXVIII of {\em American Mathematical
  Society Colloquium Publications}.
\newblock American Mathematical Society, 1962.

\bibitem{RibOSAOS}
Paulo Ribenboim.
\newblock Ordering the set of antichains of an ordered set.
\newblock {\em Collect. Math.}, 46(1--2):159--170, 1995.

\bibitem{SmiMEIL}
Dorothy~P. Smith.
\newblock Meet-irreducible elements in implicative lattices.
\newblock {\em Proc. Amer. Math. Soc.}, 34(1):57--62, 1972.

\bibitem{StaCN}
Richard~P. Stanley.
\newblock {\em Catalan Numbers}.
\newblock Cambridge University Press, 2015.

\end{thebibliography}

\end{document}